\theoremstyle{plain}
\newtheorem{theorem}{Theorem}[section]
\newtheorem{corollary}[theorem]{Corollary}
\newtheorem{lemma}[theorem]{Lemma}
\newtheorem{proposition}[theorem]{Proposition}
\theoremstyle{definition}
\newtheorem{definition}[theorem]{Definition}
\newtheorem{example}[theorem]{Example}
\newtheorem{question}[theorem]{Question}
\theoremstyle{remark}
\newcommand{\N}{\mathbb{N}}
\newcommand{\Z}{\mathbb{Z}}
\renewcommand{\H}{\mathcal{H}}
\newcommand{\xx}{\mathbf{x}}
\newcommand{\yy}{\mathbf{y}}
\newcommand{\e}{\epsilon}
\newcommand{\se}{\subseteq}
\DeclareMathOperator{\orb}{Orb}
\newcommand{\forb}{\overrightarrow{Orb}}
\newcommand{\borb}{\overleftarrow{Orb}}
\title[Entropy of set-valued functions]{Topological Entropy of set-valued functions}
\author[J. P. Kelly]{James P. Kelly}
\address[J. P. Kelly]{Department of Mathematics, Christopher Newport University, Newport News, VA 23606--3072, USA}
\email{james.kelly@cnu.edu}
\author[T. Tennant]{Tim Tennant}
\address[T. Tennant]{Department of Mathematics, Baylor University, Waco, TX 76798--7328, USA}
\email{timothy\_tennant@baylor.edu}
\subjclass[2010]{37B40, 54H20, 37B45, 54C60}
\keywords{set-valued function, topological entropy, monotone}
\begin{document}
\begin{abstract}
Topological entropy is a widely studied indicator of chaos in topological dynamics. Here we give a generalized definition of topological entropy which may be applied to set-valued functions. We demonstrate that some of the well-known results concerning topological entropy of continuous (single-valued) functions extend naturally to set-valued functions while others must be altered. We also present sufficient conditions for a set-valued function to have positive or infinite topological entropy.
\end{abstract}
\maketitle

\section{Introduction}\label{Section Introduction}

The subject of topological entropy was first introduced by Adler,  Konheim, and McAndrew in 1965, and, in 1970, Bowen presented an equivalent definition in the context of metric spaces, \cite{1965,Bowen-Topological_entropy_and_axiom_a}. Topological entropy is a measure of the complexity of the dynamics of a function, and a function which has positive topological entropy is sometimes referred to as chaotic.

The study of topological entropy includes a variety of topics including sufficient conditions for a function to have positive or infinite topological entropy, the relationship between the topological entropy of a function and the structure of its inverse limit space, and what types of spaces admit positive entropy homeomorphisms, \cite{Mouron-Positive_entropy_homeomorphisms_of_chainable_continua,Ye-dynamics_of_homeomorphisms_of_hereditarily_decomposable_chainable_continua,Barge_Diamond-Dynamics_of_maps_finite_graphs,Misiurewicz-Horshoes_for_mappings_of_the_interval}. 

For many years, there has been an overlap between the study of the dynamics of a system and the study of the topological structure of its inverse limit. Some notable results in this area can be found in \cite{Barge_Diamond-Dynamics_of_maps_finite_graphs,Barge_Martin-Chaos_periodicity_and_snakelike_continua,Barge_Martin-The_construction_of_global_attractors}. In 2004, Mahavier began the study of inverse limits of upper semi-continuous, set-valued functions, \cite{Mahavier-Invlims_paper}. In recent years, there has been significant research in this area, primarily focusing on the continuum theoretic properties of these inverse limits. Many of the fundamental results concerning inverse limits of set-valued functions can be found in \cite{Ingram-Book_Setvalued_invlims}.

In this paper, we focus on the dynamics of upper semi-continuous, set-valued functions. We provide a generalization of Bowen's definition of topological entropy which may be applied to set-valued functions, and we demonstrate that some well-known results extend naturally to the more general setting while others do not.

In Section~\ref{Section Preliminary Definitions} we give some background definitions and present a definition for topological entropy of a set-valued function. We then begin our discussion of the topic by exploring some properties of topological entropy which generalize naturally to set-valued functions. We then show, in Section~\ref{Section Entropy of the shift map} that the topological entropy of a set-valued function is equal to the topological entropy of the shift map on its orbit spaces. (The orbit spaces are analogous to inverse limit spaces and are defined in Section~\ref{Section Preliminary Definitions}.) We also show that there is no loss of generality in assuming that the set-valued functions are surjective. In Section~\ref{Section Topological Conjugacy} we extend the notions of topological conjugacy and semi-conjugacy to set-valued functions and show that the results concerning these properties also generalize naturally to set-valued functions.

Next, we discuss some of the ways in which results concerning topological entropy of set-valued functions differ from the results in the traditional setting. In Section~\ref{Section Entropy of F^k}, we demonstrate the relationship between the topological entropy of a set-valued function and that of its iterates. Finally, we present sufficient conditions for a set-valued function to have positive topological entropy in Section~\ref{Section positive entropy} and sufficient conditions for infinite topological entropy in Section~\ref{Section infinite entropy}.

\section{Preliminary Definitions}\label{Section Preliminary Definitions}
Given a compact metric space $X$, we denote by $2^X$ the set of all non-empty compact subsets of $X$.

If $X$ and $Y$ are compact metric spaces, a function $F:X\rightarrow 2^Y$ is said to be \emph{upper semi-continuous} at a point $x\in X$ if, for every open set $V\se Y$ containing $F(x)$, there exists an open set $U\se X$ containing $x$ such that $F(t)\se V$ for all $t\in U$. $F$ is said to be \emph{upper semi-continuous} if it is upper semi-continuous at each point of $X$.

The \emph{graph} of a function $F:X\rightarrow2^Y$ is defined to be the set \[\Gamma(F)=\left\{(x,y)\in X\times Y:y\in F(x)\right\}.\] Ingram and Mahavier show, in \cite{Ingram_Mahavier-Invlims_paper}, that if $X$ and $Y$ are compact Hausdorff spaces, then $F:X\rightarrow2^Y$ is upper semi-continuous if, and only if, $\Gamma(F)$ is closed in $X\times Y$. If $f:X\rightarrow Y$, we may think of $f$ as a set-valued function by defining a function $\tilde{f}:X\rightarrow2^Y$ by $\tilde{f}(x)=\{f(x)\}$. In this case, $\tilde{f}$ is upper semi-continuous if and only if $f$ is continuous. For increased distinction, we will refer to an upper semi-continuous function $F:X\rightarrow2^Y$ as a \emph{set-valued function} and a continuous function $f:X\rightarrow Y$ as a \emph{mapping}. 

If $X,Y$, and $Z$ are compact metric spaces, $F:X\rightarrow2^Y$ and $G:Y\rightarrow2^Z$, we define $G\circ F:X\rightarrow2^Z$ by \[G\circ F(x)=\bigcup_{y\in F(x)}G(y).\] If $F$ and $G$ are upper semi-continuous, then $G\circ F$ is as well.

In this paper, we will be focusing on the setting where $X$ is a compact metric space and $F:X\rightarrow2^X$ is upper semi-continuous. In this case, the pair $(X,F)$ is called a \emph{topological dynamical system}. We define $F^0$ to be the identity on $X$, and for each $n\in\N$, we let $F^n=F\circ F^{n-1}$.



We begin the process of defining topological entropy for set-valued functions by defining multiple types of orbits for the system $(X,F)$. A \emph{forward orbit} for the system is a sequence $(x_0,x_1,x_2,\ldots)$ in $X$ such that for each $i\geq0$, $x_{i+1}\in F(x_i)$. A \emph{backward orbit} is a sequence $(\ldots,x_{-2},x_{-1},x_0)$ in $X$ such that for each $i\leq-1$, $x_{i+1}\in F(x_i)$. A \emph{full orbit} is a sequence $(\ldots,x_{-2},x_{-1},x_0,x_1,x_2,\ldots)$ in $X$ such that for each $i\in\Z$, $x_{i+1}\in F(x_i)$.

Finally, we will also consider finite orbits. Given a natural number $n$, an \emph{$n$-orbit} for the system $(X,F)$ is a finite sequence $(x_0,\ldots,x_{n-1})$ in $X$ such that for each $i=0,\ldots,n-2$, $x_{i+1}\in F(x_i)$.

A full orbit $\xx$ is called \emph{periodic} if there exists $m\in\N$ such that $x_i=x_{i+m}$ for all $i\in\Z$. If $\xx$ is periodic, the \emph{period} of $\xx$ is the smallest number $m\in\N$ for which $x_i=x_{i+m}$ for all $i\in\Z$.

\begin{definition}\label{Definition orbit spaces}
Given a set $A\se X$, and $n\in\N$, we define the following orbit spaces:
\begin{eqnarray}
\orb_n(A,F) &=& \{n\text{-orbits }(x_0,\ldots,x_{n-1}):x_0\in A\}\nonumber\\
\forb(A,F) &=& \{\text{forward orbits }(x_0,x_1,\ldots):x_0\in A\}\nonumber\\
\borb(A,F) &=& \{\text{backward orbits }(\ldots,x_{-1},x_0):x_0\in A\}\nonumber\\
\orb(A,F) &=& \{\text{full orbits }(\ldots,x_{-1},x_0,x_1,\ldots):x_0\in A\}\nonumber
\end{eqnarray}
\end{definition}

Each of these is given the subspace topology inherited as a subset of the respective product space. Let $d$ be the metric on $X$, and suppose that the diameter of $X$ is equal to 1. For each $n\in\N$, we define a metric $D$ on $\prod_{i=1}^nX$ by \[D(\xx,\yy)=\max_{0\leq i\leq n-1}d\left(x_i,y_i\right).\]

If $\mathbb{A}\in\{\Z,\Z_{\geq0},\Z_{\leq0}\}$ then we define a metric $\rho$ on $\prod_{i\in\mathbb{A}}X$ by \[\rho(\xx,\yy)=\sup_{i\in\mathbb{A}}\frac{d\left(x_i,y_i\right)}{|i|+1}.\] Also, for any set $L\se\mathbb{A}$, we define the \emph{projection map} $\pi_L:\prod_{i\in\mathbb{A}}X\rightarrow\prod_{i\in L}X$ by $\pi_L(\xx)=(x_i)_{i\in L}$.

In the past decade, there has been a significant amount of research concerning the inverse limits of upper semi-continuous set-valued functions. As it is typically defined, the inverse limit of the system $(X,F)$ indexed by $\Z_{\geq0}$ is equal to $\borb(X,F)$, and the inverse limit of the system indexed by $\Z$ is equal to $\orb(X,F)$. Also, $\forb(X,F)$ would be equal to the inverse limit of the system $(X,F^{-1})$ where $F^{-1}:X\rightarrow2^X$ is defined by $x\in F^{-1}(y)$ if, and only if, $y\in F(x)$. (Note that for $F^{-1}$ to be well-defined, it is assumed that $F$ is surjective, in the sense that for all $y\in X$, there exists $x\in X$ such that $y\in F(x)$.)

In the case where $f$ is a mapping, there is less need for this distinction between the various orbit spaces. In that case, $\borb(X,f)$ is homeomorphic to $\orb(X,f)$, and, for each $n\in\N$, $\orb_n(X,f)$ is homeomorphic to $X$.

We now begin our definition of topological entropy.  For the sake of completeness, we first give the definition in terms of a mapping before generalizing to set-valued functions.

\begin{definition}\label{Definition (n,e)-separated}
Let $X$ be a compact metric space. A set $S\se X$ is called \emph{$\e$-separated} if for each $x,y\in S$, $x\neq y$, $d(x,y)\geq\e$.
Let $f:X \rightarrow X$ be a mapping, and let $n\in \N$.  We say $S\se X$ is \emph{$(n,\e)$-separated} if for $x,y\in S$ with $x\neq y$, we have that
\[\max_{0\leq i \leq n-1}d\left(f^i(x),f^i(y)\right) \geq \e.\]
We denote by $s_{n,\e}(f)$ the largest cardinality of an $(n,\e)$-separated set with respect to $f$.  When there is no ambiguity, we shall use $s_{n,\e}$.
\end{definition}

\begin{definition}\label{Definition entropy}
Given $\e>0$, the \emph{$\e$-entropy} of $f$ is defined to be \[h(f,\e)=\limsup_{n\rightarrow\infty}\frac{1}{n}\log s_{n,\e},\]
and the \emph{topological entropy} of $f$ is defined to be \[h(f)=\lim_{\e\rightarrow0}h(f,\e).\]
\end{definition}

To adapt this definition to the context of set-valued functions, we work in $\orb_n(X,F)$ with the metric defined above, to preserve the idea of ``separated" meaning separated in at least one coordinate.

\begin{definition}\label{set-valued (n,e)-separated}
Let $(X,F)$ be a topological dynamical system, and let $n\in\N$ and $\e>0$. An \emph{$(n,\e)$-separated} set for $F$ is an $\e$-separated subset of $\orb_n(X,F)$. We denote by $s_{n,\e}(F)$, the largest cardinality of an $(n,\e)$-separated set with respect to $F$. When no ambiguity shall arise, we simply write $s_{n,\e}$.
\end{definition}

\begin{definition}\label{set-valued Definition entropy}
Given $\e>0$, the \emph{$\e$-entropy} of $F$ is defined to be \[h(F,\e)=\limsup_{n\rightarrow\infty}\frac{1}{n}\log s_{n,\e},\]
and the \emph{topological entropy} of $F$ is defined to be \[h(F)=\lim_{\e\rightarrow0}h(F,\e).\]
\end{definition}

Just as in the case of a mapping on $X$, we may give an equivalent definition using spanning sets rather than separated sets. 

\begin{definition}\label{set-valued Definition (n,e)-spanning}
Let $X$ be a compact metric space. A set $S\se X$ is called \emph{$\e$-spanning} if for each $y\in X$, there exists $x\in S$ with $d(x,y)<\e$.

Let $(X,F)$ be a topological dynamical system, and let $n\in\N$ and $\e>0$. An \emph{$(n,\e)$-spanning} set for $F$ is an $\e$-spanning subset of $\orb_n(X,F)$. We denote by $r_{n,\e}(F)$, the smallest cardinality of an $(n,\e)$-spanning set with respect to $F$.
\end{definition}

It is shown in \cite{Tennant-Specification} that \[\lim_{\e\rightarrow0}\limsup_{n\rightarrow\infty}\frac{1}{n}\log r_{n,\e}(F)=\lim_{\e\rightarrow0}\limsup_{n\rightarrow\infty}\frac{1}{n}\log s_{n,\e}(F).\] Thus, either notion may be used to define the topological entropy of $F$.


\section{Topological Entropy of the Shift Map on an Orbit Space}\label{Section Entropy of the shift map}

In \cite{Bowen-Topological_entropy_and_axiom_a}, Bowen shows that the entropy of a mapping on $X$ is equal to the entropy of the shift map on the inverse limit space. In this section, we establish analogous results by showing that the entropy of $F$ is equal to the entropy of the shift maps on any of the orbit spaces defined in Definition~\ref{Definition orbit spaces}.

\begin{theorem}\label{Theorem shift map forward orbits}
Let $(X,F)$ be a topological dynamical system. If $\sigma:\forb(X,F)\rightarrow\forb(X,F)$ is the shift map defined by \[\sigma\left(x_0,x_1,x_2,\ldots\right)=\left(x_1,x_2,x_3,\ldots\right),\] then $h(\sigma)=h(F)$.
\end{theorem}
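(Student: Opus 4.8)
The plan is to prove $h(\sigma) = h(F)$ by comparing $(n,\e)$-separated and $(n,\e)$-spanning sets on $\forb(X,F)$ with those on the orbit spaces $\orb_n(X,F)$. The key observation is that a point of $\forb(X,F)$ is itself a forward orbit $\xx = (x_0, x_1, x_2, \ldots)$, and the iterates of the shift map recover shifted tails: $\sigma^i(\xx) = (x_i, x_{i+1}, \ldots)$. So to compare $\sigma$-orbit segments of length $n$ is to compare the points $\xx, \sigma(\xx), \ldots, \sigma^{n-1}(\xx)$ in the metric $\rho$, and since $\rho$ weights the $j$-th coordinate by $1/(j+1)$, the distance $\rho(\sigma^i(\xx), \sigma^i(\yy))$ is controlled above and below by the $d$-distances $d(x_{i+j}, y_{i+j})$ for $j \geq 0$. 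This means that distinguishing two forward orbits under $n$ iterates of $\sigma$ up to scale $\e$ is, up to a controlled change in the scale parameter, the same as distinguishing their length-$N$ initial segments in $\orb_N(X,F)$ for $N$ somewhat larger than $n$.

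The steps I would carry out: First, for the inequality $h(\sigma) \leq h(F)$, given an $(n,\e)$-separated set $S$ for $\sigma$ in $\forb(X,F)$, I would argue that two distinct points $\xx, \yy \in S$ must differ by at least some $\e' = \e'(\e) > 0$ in some coordinate $x_k, y_k$ with $0 \le k \le n - 1 + m$ for a suitable $m = m(\e)$ (because $\rho$-distance at least $\e$ somewhere among $\sigma^0(\xx), \ldots, \sigma^{n-1}(\xx)$ forces, by the $1/(j+1)$ weighting and $\operatorname{diam} X = 1$, a $d$-distance at least $\e$ in one of finitely many early coordinates of one of the shifted sequences). Restricting $S$ to its first $n + m$ coordinates then gives an $(n+m, \e')$-separated set for $F$ in $\orb_{n+m}(X,F)$, so $s_{n,\e}(\sigma) \le s_{n+m, \e'}(F)$; taking $\limsup \frac1n \log(\cdot)$ and then $\e \to 0$ yields $h(\sigma) \le h(F)$, using that $m$ is fixed once $\e$ is. Second, for $h(F) \leq h(\sigma)$, I would run the symmetric argument with spanning sets: given an $(n, \e)$-spanning set for $\sigma$, project to obtain a spanning set in $\orb_n(X, F)$ with a comparable (slightly coarsened) scale, using that any finite orbit segment extends to a full forward orbit (by upper semi-continuity / compactness of $\Gamma(F)$, plus surjectivity-free extension since we only need forward extension, which is automatic as $F(x)$ is always nonempty), and that $\rho$ dominates a rescaled version of $D$ on the first $n$ coordinates. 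Then invoke the cited equivalence of the $r_{n,\e}$ and $s_{n,\e}$ definitions of entropy to conclude.

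The main obstacle I expect is bookkeeping the scale parameters carefully through the $1/(j+1)$ weighting: I need to verify that the auxiliary integer $m = m(\e)$ and modified scale $\e' = \e'(\e)$ depend only on $\e$ (not on $n$), so that the factor $\frac{1}{n}\log s_{n+m,\e'}(F) = \frac{n+m}{n}\cdot\frac{1}{n+m}\log s_{n+m,\e'}(F)$ has $\frac{n+m}{n} \to 1$ and the $\limsup$ passes to $h(F, \e')$, which then tends to $h(F)$ as $\e \to 0$. A secondary technical point is confirming that every $n$-orbit in $\orb_n(X,F)$ really does extend to a point of $\forb(X,F)$ so that the spanning-set comparison in the reverse direction is legitimate; this follows because $F(x) \neq \emptyset$ for every $x$, so one can always choose $x_n \in F(x_{n-1})$, and so on, inductively.
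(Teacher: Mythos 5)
Your argument is correct and follows essentially the same route as the paper: for $h(\sigma)\le h(F)$ you truncate an $(n,\e)$-separated set for $\sigma$ to its first $n+m(\e)$ coordinates with $m(\e)\approx 1/\e$ independent of $n$ (the paper's choice $k$ with $1/k<\e$, where in fact $\e'=\e$ suffices since $d(x_{i+j},y_{i+j})\ge \e(i+1)\ge\e$), and for $h(F)\le h(\sigma)$ you use the extendability of $n$-orbits to forward orbits together with the bound $\rho\left(\sigma^i(\xx),\sigma^i(\yy)\right)\ge d(x_i,y_i)$. The only cosmetic difference is that you phrase the second inequality by pulling back spanning sets and invoking the $r_{n,\e}$/$s_{n,\e}$ equivalence, whereas the paper pushes separated sets for $F$ forward to separated sets for $\sigma$; the underlying comparison is identical.
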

\begin{proof}
Let $n\in\N$ and $\e>0$. We will show that $s_{n,\e}(F)\leq s_{n,\e}(\sigma)$. Let $S\se\orb_n(X,F)$ be an $(n,\e)$-separated set for $F$ of maximal cardinality. Each $n$-orbit $(x_0,\ldots,x_{n-1})\in S$ may be extended to an infinite forward orbit in $\forb(X,F)$. Let $T\se\forb(X,F)$ be the set of all such forward orbits.

Claim: $T$ is an $(n,\e)$-separated set for $\sigma$ as defined in Definition~\ref{Definition (n,e)-separated}.

To see this, let $\xx,\yy\in T$. Then $(x_0,\ldots,x_{n-1})$ and $(y_0,\ldots,y_{n-1})$ are in $S$, so $d(x_j,y_j)\geq\e$ for some $0\leq j\leq n-1$. Thus, \[\rho\left(\sigma^j(\xx),\sigma^j(\yy)\right)=\sup_{i\geq0}\frac{d\left(x_{i+j},y_{i+j}\right)}{i+1}\geq d\left(x_j,y_j\right)\geq\e.\]

Thus we have that $s_{n,\e}(F)\leq s_{n,\e}(\sigma)$ for all $n\in\N$ and $\e>0$. If follows that $h(F)\leq h(\sigma)$.

Next, fix $\e>0$, and choose $k\in\N$ with $1/k<\e$. We show that for each $n\in\N$, $s_{n+k,\e}(\sigma)\leq s_{n,\e}(F)$. Let $S\se\forb(X,F)$ be an $(n,\e)$-separated set for $\sigma$ of maximal cardinality (as defined in Definition~\ref{Definition (n,e)-separated}). Then, for each $\xx,\yy\in S$, there exists $j=0,\ldots,n-1$ such that $\rho(\sigma^j(\xx),\sigma^j(\yy))\geq\e$. Thus, there exists $i\in\N$ such that \[\e\leq\frac{d\left(x_{i+j},y_{i+j}\right)}{i+1}\leq d\left(x_{i+j},y_{i+j}\right).\] Since $1/k<\e$, it follows that $i+1<k$. Thus we have that $i < k$ and $j\leq n-1$, so $i+j< n+k-1$.

Therefore, if $T=\{(x_0,\ldots,x_{n+k-1}):\xx\in S\}$, then $T$ is an $(n+k,\e)$-separated set for $F$. Moreover, \[s_{n,\e}(\sigma)=|S|=|T|\leq s_{n+k,\e}(F),\] and it follows that $h(\sigma)\leq h(F)$.
\end{proof}

In order to establish similar results for the shift maps on $\borb(X,F)$ and $\orb(X,F)$, we must first establish that there is no loss of generality in assuming that $F$ is surjective. Bowen established this fact for mappings in \cite{Bowen-Topological_entropy_and_axiom_a}.

\begin{definition}\label{Definition non-wandering}
Let $X$ be a compact metric space, and $f:X\rightarrow X$ be a mapping. A point $x\in X$ is called \emph{non-wandering} if for every open set $U\se X$ containing $x$, there exists $n\in\N$ such that $f^n(U)\cap U\neq\emptyset$.
\end{definition}

\begin{theorem}[Bowen]\label{Theorem Bowen non-wandering}
Let $X$ be a compact metric space, and $f:X\rightarrow X$ be a mapping. If $\Omega$ is the set of non-wandering points then $h(f)=h(f|_{\Omega})$.
\end{theorem}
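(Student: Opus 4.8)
The plan is to prove the two inequalities $h(f|_{\Omega})\le h(f)$ and $h(f)\le h(f|_{\Omega})$ separately. The first is routine: $\Omega$ is closed and forward invariant (if $x\in\Omega$ and $U$ is an open set with $f(x)\in U$, then applying the non-wandering condition to the open set $f^{-1}(U)\ni x$ produces $k\ge1$ with $f^k(U)\cap U\neq\emptyset$, so $f(x)\in\Omega$), and hence every $(n,\e)$-separated subset of $\Omega$ for $f|_{\Omega}$ is an $(n,\e)$-separated set for $f$; thus $s_{n,\e}(f|_{\Omega})\le s_{n,\e}(f)$ for all $n$ and $\e$, whence $h(f|_{\Omega})\le h(f)$.

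For the reverse inequality the key tool is a uniform bound on how long a forward orbit can linger away from $\Omega$: for every $\e>0$ there is $N(\e)\in\N$ such that for every $x\in X$ the forward orbit of $x$ meets $W_\e:=\{y\in X:d(y,\Omega)\ge\e\}$ in at most $N(\e)$ indices. I would prove this by noting that $W_\e$ is compact and disjoint from $\Omega$, so each of its points is wandering: for each $y\in W_\e$ choose an open $U_y\ni y$ with $f^k(U_y)\cap U_y=\emptyset$ for all $k\ge1$, extract a finite subcover $U_{y_1},\dots,U_{y_p}$ of $W_\e$, and observe that a single forward orbit meets each $U_{y_j}$ at most once (two hits at times $i<i'$ would force $f^{i'-i}(U_{y_j})\cap U_{y_j}\neq\emptyset$), hence meets $W_\e$ at most $p$ times. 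Granting this, fix $\e>0$ and set $N=N(\e/2)$; then any length-$n$ orbit segment $(x,\dots,f^{n-1}(x))$ has at most $N$ coordinates in $W_{\e/2}$, so its index set splits into at most $N$ ``bad'' singletons and at most $N+1$ ``good'' blocks of consecutive indices along which the orbit stays in the $(\e/2)$-neighbourhood of $\Omega$. I would then bound $s_{n,\e}(f)$ by the product of the number of placements of the bad positions ($\le n^N$), the number of choices of the orbit there taken to within $\e/2$ from a fixed finite $(\e/2)$-net of $X$ ($\le M^N$, with $M$ the size of the net), and, block by block, the number of length-$\ell$ orbit segments staying $(\e/2)$-close to $\Omega$ counted up to $\e/2$. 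Passing to $\tfrac1n\log$ as $n\to\infty$ kills the factors $n^N$ and $M^N$, and since the good-block lengths sum to at most $n$ one is left with $h(f,\e)$ bounded by the exponential growth rate in $\ell$ of the number of length-$\ell$ orbit segments staying $(\e/2)$-close to $\Omega$; it then remains, after letting $\e\to0$, to see that this growth rate tends to $h(f|_{\Omega})$.

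That last point is the real obstacle. One cannot simply replace an orbit segment that stays $\delta$-close to $\Omega$ by a genuine orbit segment of $f|_{\Omega}$, because without a shadowing hypothesis a long such segment need not remain uniformly close to any single $f|_{\Omega}$-orbit. The remedy is to pass to the maximal forward-invariant compact set $\Lambda_\delta$ contained in the closed $\delta$-neighbourhood of $\Omega$, bound the segment count for parameter $\delta$ in terms of $h(f|_{\Lambda_\delta})$, and then exploit that the sets $\Lambda_\delta$ decrease to $\Omega$ as $\delta\to0$ (using forward invariance of $\Omega$ again); the genuine difficulty is then to show $h(f|_{\Lambda_\delta})\to h(f|_{\Omega})$, i.e. that shrinking the neighbourhood leaves no extra entropy behind, which requires a semicontinuity argument exploiting that the non-wandering set of $f|_{\Lambda_\delta}$ is itself contained in $\Omega$. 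A cleaner route to the whole reverse inequality, which sidesteps this bookkeeping, is the variational principle: by Poincar\'e recurrence every $f$-invariant Borel probability measure is carried by the set of recurrent points, hence by $\Omega$, so it is an $f|_{\Omega}$-invariant measure and $h(f)=\sup_\mu h_\mu(f)=\sup_{\mu(\Omega)=1}h_\mu(f|_{\Omega})\le h(f|_{\Omega})$.
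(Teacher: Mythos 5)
The paper does not actually prove this theorem: it is stated as Bowen's result and cited to \cite{Bowen-Topological_entropy_and_axiom_a}, so there is no in-paper argument to compare yours against, and your proposal has to stand on its own. It does, but only because of the variational-principle argument in your last paragraph. The easy inequality is fine: your verification that $\Omega$ is closed and forward invariant is correct, and restricting separated sets gives $s_{n,\e}(f|_{\Omega})\leq s_{n,\e}(f)$. Your covering lemma is also correct and is indeed the standard opening move of the direct proof: each point of the compact set $W_\e$ has a wandering neighbourhood, a finite subcover gives $p$ open sets each of which any forward orbit can visit at most once, so every orbit meets $W_\e$ at most $N(\e)=p$ times. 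But the direct route stalls exactly where you say it does, and the obstruction is genuine: for a decreasing family of compact invariant sets it is false in general that $h(f|_{\Lambda_\delta})\rightarrow h(f|_{\bigcap_\delta\Lambda_\delta})$ (one can nest compacta each carrying a horseshoe and shrinking down to a fixed point), so the semicontinuity you need cannot be had for free; it has to be wrung out of the fact that the recurrence of $f|_{\Lambda_\delta}$ already sits inside $\Omega$, which essentially amounts to reproving the theorem. Bowen's own argument avoids $\Lambda_\delta$ entirely and instead does more delicate bookkeeping with spanning sets for $\Omega$ itself. Fortunately your closing argument is complete and correct: by Poincar\'e recurrence applied to a countable base of the separable space $X$, every $f$-invariant Borel probability measure gives full mass to the recurrent points, hence to $\Omega$; such a measure is then an $f|_{\Omega}$-invariant measure with the same Kolmogorov--Sinai entropy, and the variational principle applied on $X$ and on $\Omega$ gives $h(f)=\sup_\mu h_\mu(f)\leq h(f|_{\Omega})$. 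This is the standard modern proof (it appears as a corollary of the variational principle in \cite{Walters-Book_ergodic_theory}), and together with the easy inequality it finishes the theorem. So the proposal should be accepted on the strength of its final paragraph, with the understanding that the direct counting route as written is a sketch with an acknowledged and genuinely nontrivial gap, not a proof.
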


Note that if $C=\bigcap_{n\in\N}f^n(X)$, then $C$ contains all the non-wandering points, so it follows from Theorem~\ref{Theorem Bowen non-wandering} that the entropy of $f$ is equal to the entropy of $f|_C$. We show in the following lemma that the same holds for upper semi-continuous set-valued functions.

\begin{lemma}\label{Lemma surjective core}
Let $(X,F)$ be a topological dynamical system, and let $C=\bigcap_{n\in\N}F^n(X)$. Then $h(F)=h(F|_C)$.
\end{lemma}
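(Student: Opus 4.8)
The plan is to show the two inequalities $h(F|_C) \le h(F)$ and $h(F) \le h(F|_C)$ directly from the definition via separated sets in the finite orbit spaces $\orb_n$. The first inequality is essentially automatic: since $C \se X$, every $n$-orbit of $F|_C$ is in particular an $n$-orbit of $F$ starting in $C \se X$, so $\orb_n(C, F|_C) \se \orb_n(X,F)$, and any $(n,\e)$-separated set for $F|_C$ is an $(n,\e)$-separated set for $F$. Hence $s_{n,\e}(F|_C) \le s_{n,\e}(F)$ for all $n$ and $\e$, giving $h(F|_C) \le h(F)$.

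For the reverse inequality, the key observation is that an arbitrary $n$-orbit $(x_0, \ldots, x_{n-1})$ of $F$ can be shifted forward to land inside $C$: if we take a long orbit $(y_0, \ldots, y_{m + n - 1})$ of $F$ with $m$ large, then $y_m \in F^m(X)$, and more generally each $y_{m+i} \in F^{m+i-j}(x)$ for the relevant index, so the tail $(y_m, \ldots, y_{m+n-1})$ consists of points deep in the images $F^k(X)$. The point is that, because $X$ is compact and the sets $F^k(X)$ are nested closed sets decreasing to $C$, for any $\delta > 0$ there is $m = m(\delta)$ such that $F^m(X)$ is contained in the $\delta$-neighborhood of $C$. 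So I would fix $\e > 0$, set $\delta$ small (say $\delta < \e/4$), pick the corresponding $m$, and for each $n$-orbit of $F$ appearing in a maximal $(n,\e)$-separated set $S \se \orb_n(X,F)$, extend it \emph{backward} by $m$ steps to get an $(m+n)$-orbit, then take its length-$n$ tail; this tail starts in $F^m(X)$, hence within $\delta$ of $C$. Perturbing the starting point (and correspondingly the whole tail orbit, using upper semi-continuity / the structure of $\Gamma(F)$) to an honest orbit in $C$ changes each coordinate by a controlled amount, so distinct $\e$-separated orbits of $F$ yield $(\e/2)$-separated orbits of $F|_C$ of the same length $n$. This would give $s_{n,\e}(F) \le s_{n, \e/2}(F|_C)$ up to the additive shift in the index (which washes out in the $\frac1n \log$ limit), hence $h(F,\e) \le h(F|_C, \e/2)$ and letting $\e \to 0$, $h(F) \le h(F|_C)$.

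The main obstacle is the perturbation step: moving a length-$n$ orbit of $F$ whose first point lies within $\delta$ of $C$ to a genuine orbit \emph{in} $C$ while controlling all $n$ coordinates uniformly in $n$. Naively replacing $x_0$ by a nearby point $x_0' \in C$ and then choosing $x_{i+1}' \in F(x_i')$ need not keep $x_i'$ close to $x_i$ as $i$ grows, since $F$ is only upper semi-continuous, not a contraction. The cleaner route, which I would pursue, is to avoid perturbation altogether: instead of extending backward from $S$, run the argument the other way. Given a maximal $(m+n, \e)$-separated set $S' \se \orb_{m+n}(X,F)$, its image under the length-$n$ tail projection $\pi: \orb_{m+n}(X,F) \to \orb_n(X,F)$ lands (after the fact that tails lie in $F^m(X)$-initiated orbits) near $\orb_n(C, F|_C)$; but more to the point, one can show every $(n,\e)$-separated set for $F$ injects, via backward extension by $m$ steps followed by the tail map, into an $(n, \e)$-spanning-type or separated-type family for $F|_C$ after absorbing the $\delta$-error. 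Given the equivalence of the spanning-set and separated-set definitions quoted from \cite{Tennant-Specification}, it may in fact be slickest to bound $r_{n,\e}(F)$ below in terms of $r_{n', \e'}(F|_C)$: a spanning set for $F|_C$ together with the nesting $F^m(X) \subset N_\delta(C)$ yields, by re-extending backward, a spanning set for the relevant orbits of $F$, and symmetrically. I expect that once the "images eventually concentrate near $C$" lemma (a compactness argument on the nested sequence $F^n(X)$) is isolated, the entropy comparison follows by the same bookkeeping as in the proof of Theorem~\ref{Theorem shift map forward orbits}, with the additive constant $m$ playing the role that $k$ played there.
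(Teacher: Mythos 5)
Your first inequality is fine, and you have correctly located the crux of the second: turning an $n$-orbit of $F$ whose coordinates are merely close to $C$ into an honest $n$-orbit of $F|_C$, with control on all $n$ coordinates simultaneously. Neither of your two proposed resolutions closes this gap. The perturbation route fails for exactly the reason you state: upper semi-continuity gives no lower bound on $F(x')$ for $x'$ near $x$, so an orbit through $x$ may have no companion orbit through a nearby point of $C$. The ``cleaner route'' has two concrete problems. First, the backward extension by $m$ steps is not available in general: if $x_0\notin F(X)$ the orbit has no backward extension at all, and surjectivity is precisely what this lemma is meant to let one assume later, so it cannot be invoked here. Second, and more seriously, ``absorbing the $\delta$-error'' presupposes that every $n$-orbit of $F$ starting in $F^m(X)$ lies within $\e/4$ of $\orb_n(C,F|_C)$ in the max-metric $D$. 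The nested-compact-sets argument does give $\bigcap_m\orb_n\left(F^m(X),F\right)=\orb_n\left(C,F|_C\right)$ for each fixed $n$, but the resulting $m$ depends on $n$; a uniform-in-$n$ version does not follow from soft compactness (closeness of infinite forward orbits in the weighted metric $\rho$ controls only finitely many coordinates), and establishing it is essentially the content of the statement being proved. So the sketch is circular at the decisive step.

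The paper avoids all of this by not working with finite orbits at all: by Theorem~\ref{Theorem shift map forward orbits}, $h(F)=h(\sigma)$ where $\sigma$ is the shift on $\forb(X,F)$, a genuine single-valued mapping; one checks that $\forb\left(C,F|_C\right)=\bigcap_{n\in\N}\sigma^n\left(\forb(X,F)\right)$, and since this set contains the non-wandering set of $\sigma$, Bowen's theorem (Theorem~\ref{Theorem Bowen non-wandering}) yields $h(\sigma)=h\left(\sigma|_{\forb(C,F|_C)}\right)=h(F|_C)$. In other words, the hard analytic content of your missing step is exactly Bowen's non-wandering theorem for mappings, and the efficient proof imports that theorem rather than reproving it.
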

\begin{proof}
First, note that $F(C)=C$. Also, since $C=\bigcap_{n\in\N}F^n(X)$, it follows that \[\forb\left(C,F|_C\right)=\bigcap_{n\in\N}\sigma^n\left(\forb(X,F)\right).\] Let $\widetilde{C}=\forb(C,F|_C)$. Since $\sigma$ is a mapping, we have from Theorem~\ref{Theorem Bowen non-wandering} that $h(\sigma)=h(\sigma|_{\widetilde{C}})$. Then, by Theorem~\ref{Theorem shift map forward orbits}, we have that $h(F)=h(\sigma)$, and $h(F|_C)=h(\sigma|_{\widetilde{C}})$. The result follows.
\end{proof}

\begin{theorem}\label{Theorem shift map other orbit spaces}~

\begin{enumerate}
\item\label{Theorem shift map 2} If $\sigma:\borb(X,F)\rightarrow\borb(X,F)$ is the shift map defined by \[\sigma\left(\ldots,x_{-2},x_{-1},x_0\right)=\left(\ldots,x_{-3},x_{-2},x_{-1}\right)\] then $h(\sigma)=h(F)$.
\item\label{Theorem shift map 3} If $\sigma:\orb(X,F)\rightarrow\orb(X,F)$ is the shift map defined by $\sigma(\xx)=\yy$ where for each $i\in\Z$, $y_i=x_{i+1}$, then $h(\sigma)=h(F)$.
\end{enumerate}
\end{theorem}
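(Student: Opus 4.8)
The plan is to mimic the proof of Theorem~\ref{Theorem shift map forward orbits}, comparing $(n,\e)$-separated subsets of $\orb_n(X,F)$ with $(n,\e)$-separated sets (in the sense of Definition~\ref{Definition (n,e)-separated}) for the shift $\sigma$ on the relevant orbit space. The one genuinely new ingredient is that, in order to turn a separated set of finite $n$-orbits into a separated set of \emph{backward} or \emph{full} orbits, each $n$-orbit must be extended backward, and this requires $F$ to be surjective; so I would begin by reducing to that case. Put $C=\bigcap_{n\in\N}F^n(X)$. As observed after Theorem~\ref{Theorem Bowen non-wandering}, $F(C)=C$, so $F|_C$ is surjective, and by Lemma~\ref{Lemma surjective core} $h(F)=h(F|_C)$. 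Moreover, replacing $X$ by $C$ changes neither $\borb(X,F)$ nor $\orb(X,F)$, nor their shift maps: if $(\ldots,x_{-1},x_0)$ is a backward orbit then $x_i\in F^n(\{x_{i-n}\})\se F^n(X)$ for every $n$, so every coordinate already lies in $C$, and likewise for full orbits. Hence it suffices to prove both statements assuming $F$ surjective, which I now do.

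For the inequality $h(F)\le h(\sigma)$ I would take an $(n,\e)$-separated set $S\se\orb_n(X,F)$ of maximal cardinality and extend each $(a_0,\ldots,a_{n-1})\in S$ to an orbit in the ambient space: for $\orb(X,F)$, put $a_j$ in coordinate $j$ and extend forward (automatic) and backward (using surjectivity); for $\borb(X,F)$, put $a_j$ in coordinate $j-(n-1)$ and extend backward. Let $T$ be the resulting family, so $|T|=|S|$. To see that $T$ is $(n,\e)$-separated for $\sigma$, suppose two of its members arise from $\mathbf a\ne\mathbf b$ in $S$, so $d(a_m,b_m)\ge\e$ for some $0\le m\le n-1$; then $\sigma^m$ (for $\orb(X,F)$) or $\sigma^{\,n-1-m}$ (for $\borb(X,F)$) carries $a_m$ and $b_m$ into coordinate $0$, where the weight $\frac1{|i|+1}$ equals $1$, so the corresponding images are at $\rho$-distance at least $\e$. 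Thus $s_{n,\e}(\sigma)\ge s_{n,\e}(F)$ for every $n$ and $\e$, and $h(\sigma)\ge h(F)$.

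For the reverse inequality I would fix $\e>0$, choose $k\in\N$ with $1/k<\e$, and take an $(n,\e)$-separated set $S$ for $\sigma$ of maximal cardinality. If $\mathbf x\ne\mathbf y$ in $S$ then $\rho(\sigma^j\mathbf x,\sigma^j\mathbf y)\ge\e$ for some $0\le j\le n-1$; since the terms of the supremum defining $\rho$ tend to $0$, the supremum is attained, so there is an index $i$ with $d(x_{i+j},y_{i+j})\ge\e(|i|+1)$ (with $i-j$ replacing $i+j$ in the backward case), and as $\operatorname{diam}X=1$ this forces $|i|<k$. Consequently the coordinate at which $\mathbf x$ and $\mathbf y$ differ by at least $\e$ lies within a block of at most $n+2k-2$ consecutive integers; restricting each member of $S$ to that block and re-indexing it to start at $0$ yields a set of $m$-orbits, $m\le n+2k-2$, which is $(m,\e)$-separated for $F$ and has the same cardinality as $S$. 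Hence $s_{n,\e}(\sigma)\le s_{n+2k-2,\e}(F)$, and since $\limsup_n\frac1n\log s_{n+2k-2,\e}(F)=\limsup_m\frac1m\log s_{m,\e}(F)=h(F,\e)$, we obtain $h(\sigma,\e)\le h(F,\e)$; letting $\e\to0$ gives $h(\sigma)\le h(F)$.

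I expect the main obstacle to be the index bookkeeping for $\borb(X,F)$: its elements are indexed by $\Z_{\le0}$, so one cannot simply ``freeze the first $n$ coordinates'' as in Theorem~\ref{Theorem shift map forward orbits}; instead one must situate the finite orbit in the coordinates $-(n-1),\ldots,0$, track how the powers of $\sigma$ translate it, and choose the window and its re-indexing so that the restricted segments are genuine elements of some $\orb_m(X,F)$. (One could instead try to deduce the full-orbit statement from Theorem~\ref{Theorem shift map forward orbits} via the shift-equivariant surjection $\pi_{\Z_{\ge0}}\colon\orb(X,F)\to\forb(X,F)$, but this yields only $h(F)\le h(\sigma)$, and the opposite inequality still requires an argument of the above type — the analogue of Bowen's fact that the natural extension preserves entropy.) Verifying the reduction itself — in particular that $\borb$ and $\orb$ really are unchanged on replacing $X$ by $C$ — is routine, but it is exactly what legitimizes the appeal to surjectivity.
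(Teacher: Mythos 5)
Your proposal is correct and follows essentially the same route as the paper: the paper likewise reduces to the surjective case via Lemma~\ref{Lemma surjective core}, extends finite orbits backward to get $h(F)\le h(\sigma)$, and reuses the windowing argument of Theorem~\ref{Theorem shift map forward orbits} for the reverse inequality. The only difference is that you spell out the index bookkeeping and explicitly verify that $\borb(X,F)$ and $\orb(X,F)$ are unchanged on replacing $X$ by $C=\bigcap_{n\in\N}F^n(X)$ — a point the paper's ``without loss of generality'' silently assumes — so your write-up is, if anything, more complete.
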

\begin{proof}
For either shift map, $\sigma$, the same argument as in the proof of Theorem~\ref{Theorem shift map forward orbits} may be used to show that $h(\sigma)\leq h(F)$. Then by Lemma~\ref{Lemma surjective core}, we may suppose without loss of generality that $F$ is surjective. Thus, each $n$-orbit for $F$ may be extended to an infinite backward (or full) orbit, so the argument used in Theorem~\ref{Theorem shift map forward orbits} may be used to show that $h(F)\leq h(\sigma)$.
\end{proof}

\begin{corollary}\label{Corollary F inverse}
Let $(X,F)$ be a topological dynamical system with $F$ surjective. Then $h(F)=h(F^{-1})$.
\end{corollary}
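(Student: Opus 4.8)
The statement sits right after the shift-map theorems and can be obtained from them, but the cleanest route in my view is a direct comparison of the finite orbit spaces, so I would first record the (bookkeeping) point that $(X,F^{-1})$ is itself a topological dynamical system. Since $F$ is surjective, $F^{-1}(y)=\{x\in X:y\in F(x)\}$ is a nonempty compact subset of $X$ for every $y$, and $\Gamma(F^{-1})$ is the image of the closed set $\Gamma(F)$ under the coordinate swap $(x,y)\mapsto(y,x)$, hence closed; so $F^{-1}$ is upper semi-continuous and $h(F^{-1})$ is defined.

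The heart of the argument is a reversal map. For each $n\in\N$ let $R_n:\prod_{i=1}^nX\to\prod_{i=1}^nX$ be $R_n(x_0,\ldots,x_{n-1})=(x_{n-1},\ldots,x_0)$. I would check that $R_n$ restricts to a bijection $\orb_n(X,F)\to\orb_n(X,F^{-1})$. Indeed, writing $z_j=x_{n-1-j}$, the condition ``$x_{i+1}\in F(x_i)$ for $0\le i\le n-2$'' becomes, after the substitution $i=n-2-j$, exactly ``$z_j\in F(z_{j+1})$ for $0\le j\le n-2$'', i.e. ``$z_{j+1}\in F^{-1}(z_j)$''. Thus $R_n$ sends $n$-orbits for $F$ to $n$-orbits for $F^{-1}$; applying the same computation with the roles of $F$ and $F^{-1}$ interchanged (using $(F^{-1})^{-1}=F$) and noting $R_n\circ R_n=\mathrm{id}$ shows $R_n$ is a bijection between the two orbit spaces, with $R_n^{-1}=R_n$.

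Next, $R_n$ is an isometry for the metric $D$: since $D$ is the maximum of the coordinatewise distances and $R_n$ merely permutes coordinates, $D(R_n\xx,R_n\yy)=D(\xx,\yy)$. Hence $R_n$ carries $\e$-separated subsets of $\orb_n(X,F)$ bijectively onto $\e$-separated subsets of $\orb_n(X,F^{-1})$, so $s_{n,\e}(F)=s_{n,\e}(F^{-1})$ for all $n\in\N$ and $\e>0$; whence $h(F,\e)=h(F^{-1},\e)$ for every $\e>0$, and letting $\e\to0$ yields $h(F)=h(F^{-1})$. (Alternatively, in the spirit of the preceding results, the coordinate reversal $i\mapsto-i$ is an isometry of $\forb(X,F)$ onto $\borb(X,F^{-1})$ conjugating the two shift maps, so $h(F)=h(\sigma)=h(F^{-1})$ by Theorem~\ref{Theorem shift map forward orbits} and Theorem~\ref{Theorem shift map other orbit spaces}.)

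I do not expect a genuine obstacle here: the only care needed is in getting the index arithmetic of the reversal right and in confirming that $F^{-1}$ satisfies the standing hypotheses. The single place where the surjectivity assumption on $F$ is actually used is precisely to make $F^{-1}$ a well-defined set-valued function (equivalently, to make $R_n$ onto via $(F^{-1})^{-1}=F$).
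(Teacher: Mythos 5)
Your proof is correct, and it takes a more elementary route than the one the paper intends. The paper states this as a corollary of Theorems~\ref{Theorem shift map forward orbits} and~\ref{Theorem shift map other orbit spaces} and gives no separate argument: the implicit proof is your parenthetical one, namely that coordinate reversal is an isometry of $\forb(X,F)$ onto $\borb(X,F^{-1})$ conjugating the two shift maps (your index check for that conjugacy is right, and the reversal is a $\rho$-isometry since $|{-i}|=|i|$), so $h(F)=h(\sigma)=h(F^{-1})$. Your main argument instead works entirely at the level of the finite orbit spaces: $R_n$ is a $D$-isometric involution carrying $\orb_n(X,F)$ onto $\orb_n(X,F^{-1})$, so $s_{n,\e}(F)=s_{n,\e}(F^{-1})$ exactly, for every $n$ and $\e$, and the entropies agree term by term. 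This buys something real: it avoids Lemma~\ref{Lemma surjective core}, Bowen's non-wandering theorem, and the $\e$-versus-$1/k$ bookkeeping in the shift-map theorems, and it gives the stronger statement that the separated-set counts themselves coincide, not just the limits. Your preliminary observations are also the right ones to make explicit --- surjectivity is used only to ensure $F^{-1}(y)\neq\emptyset$, compactness of $F^{-1}(y)$ follows since it is a closed subset of $X$ (the slice of the closed graph), and upper semi-continuity of $F^{-1}$ follows from the Ingram--Mahavier closed-graph characterization applied to the swapped graph. No gaps.
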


Theorem~\ref{Theorem shift map forward orbits} and Theorem~\ref{Theorem shift map other orbit spaces} are significant for multiple reasons. First, all of the shift maps considered are mappings, and the shift on $\orb(X,F)$ is a homeomorphism. Thus, the large volume of research on the topic of topological entropy of mappings and homeomorphisms may be applied to study the entropy of set-valued functions.

Second, there are multiple ways in which topological entropy may be defined which, in the context of mappings, are all equivalent. Theorem~\ref{Theorem shift map forward orbits} and Theorem~\ref{Theorem shift map other orbit spaces} show that any definition of topological entropy for set-valued functions which generalizes one of the definitions for topological entropy of mappings is equivalent to Definition~\ref{set-valued Definition entropy} so long as a theorem such as Theorem~\ref{Theorem shift map forward orbits} or Theorem~\ref{Theorem shift map other orbit spaces} holds for that definition.

\section{Topological Conjugacy and Semi-Conjugacy}\label{Section Topological Conjugacy}

Another concept regarding topological entropy which generalizes nicely to the context of set-valued functions is the notion of topological conjugacy and semi-conjugacy.

\begin{definition}\label{Definition topological conjugacy}
Let $(X,F)$ and $(Y,G)$ be topological dynamical systems. We say that $G$ is \emph{topologically semi-conjugate} to $F$ if there exists a continuous surjection $\varphi:X\rightarrow Y$ such that for all $x\in X$, \[G\circ\varphi(x)\se\varphi\circ F(X).\]
The surjection $\varphi$  is called a \emph{topological semi-conjugacy} from $(X,F)$ to $(Y,G)$.

We say that $F$ and $G$ are \emph{topologically conjugate} if there exists a homeomorphism $\varphi:X\rightarrow Y$ such that $G\circ\varphi=\varphi\circ F$. The homeomorphism $\varphi$ is called a \emph{topological conjugacy} between $(X,F)$ and $(Y,G)$.
\end{definition}

The following theorems generalize well-known results regarding the topological entropy of topologically conjugate or semi-conjugate mappings (see \cite[Theorem~7.2]{Walters-Book_ergodic_theory})

\begin{theorem}\label{Theorem semi-conjugacy}
Let $(X,F)$ and $(Y,G)$ be topological dynamical systems. If $G$ is topologically semi-conjugate to $F$, then $h(G)\leq h(F)$.
\end{theorem}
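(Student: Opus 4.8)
The plan is to reduce to a comparison of separated sets in the spirit of the proof of Theorem~\ref{Theorem shift map forward orbits}, the one new ingredient being a \emph{lifting} principle: a semi-conjugacy lets us lift finite orbits of $G$ to finite orbits of $F$. Fix a topological semi-conjugacy $\varphi\colon X\to Y$, so that $\varphi$ is a continuous surjection with $G\circ\varphi(x)\se\varphi\circ F(x)$ for every $x\in X$. Since $X$ is compact, $\varphi$ is uniformly continuous, so for each $\e>0$ we may fix $\delta>0$ such that $d(\varphi(a),\varphi(b))\ge\e$ forces $d(a,b)\ge\delta$.

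The key step is the following lifting claim: every $n$-orbit $(y_0,\ldots,y_{n-1})$ for $G$ is the coordinatewise $\varphi$-image of some $n$-orbit $(x_0,\ldots,x_{n-1})$ for $F$. One builds such an $(x_i)$ by induction on the coordinate: choose $x_0\in\varphi^{-1}(y_0)$, which is nonempty since $\varphi$ is onto; given $x_i$ with $\varphi(x_i)=y_i$, the inclusion $y_{i+1}\in G(y_i)=G(\varphi(x_i))\se\varphi(F(x_i))$ produces $x_{i+1}\in F(x_i)$ with $\varphi(x_{i+1})=y_{i+1}$.

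Now fix $n\in\N$ and $\e>0$, let $\delta$ be as above, and let $T\se\orb_n(Y,G)$ be an $(n,\e)$-separated set for $G$ of maximal cardinality. For each $\yy\in T$ choose, via the lifting claim, an $n$-orbit $\widehat\yy\in\orb_n(X,F)$ whose coordinatewise $\varphi$-image is $\yy$, and put $S=\{\widehat\yy:\yy\in T\}$. Distinct elements of $T$ have distinct lifts (apply $\varphi$ in each coordinate to recover $\yy$ from $\widehat\yy$), so $|S|=|T|$; and $S$ is $(n,\delta)$-separated for $F$, because if $\yy\neq\yy'$ in $T$ then $d(y_j,y'_j)\ge\e$ for some $j$, hence $d\big(\varphi((\widehat\yy)_j),\varphi((\widehat{\yy'})_j)\big)=d(y_j,y'_j)\ge\e$, hence $d\big((\widehat\yy)_j,(\widehat{\yy'})_j\big)\ge\delta$. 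Thus $s_{n,\e}(G)=|T|=|S|\le s_{n,\delta}(F)$.

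Finally, dividing by $n$ and taking $\limsup_{n\to\infty}$ gives $h(G,\e)\le h(F,\delta)\le h(F)$, the last inequality holding because $h(F,\cdot)$ is nonincreasing and $h(F)=\lim_{\delta\to0}h(F,\delta)$; letting $\e\to0$ yields $h(G)\le h(F)$. I expect the lifting claim to be the only real point of care: the semi-conjugacy inclusion $G\circ\varphi\se\varphi\circ F$ points the ``wrong way'' to push an $F$-orbit forward onto a $G$-orbit, but is exactly what is needed to pull a $G$-orbit back, which is also why the argument is organized around separated sets rather than spanning sets. (Alternatively, one can argue at the level of shift maps: $W=\{\xx\in\forb(X,F):(\varphi(x_i))_{i\ge0}\in\forb(Y,G)\}$ is a closed, shift-invariant subspace carried onto $\forb(Y,G)$ by the coordinatewise $\varphi$, and then $h(G)\le h(\sigma|_W)\le h(F)$ follows from Theorem~\ref{Theorem shift map forward orbits} and the classical factor-map inequality for mappings.)
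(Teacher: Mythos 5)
Your proof is correct and follows essentially the same route as the paper's: the heart of both arguments is the inductive lifting of an $n$-orbit $(y_0,\ldots,y_{n-1})$ of $G$ to an $n$-orbit of $F$ through $\varphi$, using $y_{i+1}\in G(\varphi(x_i))\se\varphi(F(x_i))$. The only real difference is that you count separated sets (invoking uniform continuity in contrapositive form) where the paper counts spanning sets; your version is in fact slightly cleaner, since your lifted set lands inside $\orb_n(X,F)$ by construction, whereas the paper's pushed-forward spanning set need not lie in $\orb_n(Y,G)$ and requires an extra adjustment step.
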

\begin{proof}
Let $\varphi:X\rightarrow Y$ be a topological semi-conjugacy from $(X,F)$ to $(Y,G)$. Let $\e>0$, and choose $\delta>0$ so that if $a,b\in X$ with $d(a,b)<\delta$, then $d(\varphi(a),\varphi(b))<\e/2$. For each $n\in\N$, define $\Phi_n:\orb_n(X,F)\rightarrow Y^n$ by \[\Phi_n\left(x_0,\ldots,x_{n-1}\right)=\left(\varphi\left(x_0\right),\ldots,\varphi\left(x_{n-1}\right)\right).\]

We show that for each $n\in\N$, $\orb_n(Y,G)\se\Phi_n[\orb_n(X,F)]$. Let $\yy\in\orb_n(Y,G)$. Choose any $x_0\in\varphi^{-1}(y_0)$. Now suppose that $x_i\in\varphi^{-1}(y_i)$ has been chosen for some $0\leq i\leq n-2$ such that $(x_0,x_1,\ldots, x_{n-2})\in \orb_{n-1}(X,F)$. Since \[y_{i+1}\in G\left(y_i\right)=G\circ\varphi\left(x_i\right)\se\varphi\circ F\left(x_i\right),\] there exists $x_{i+1}\in F(x_i)$ such that $\varphi(x_{i+1})=y_{i+1}$. In this manner, we construct an $n$-orbit $\xx\in\orb_n(X,F)$ such that $\Phi_n(\xx)=\yy$.

Fix $n\in\N$, and let $S$ be an $(n,\delta)$-spanning set for $F$ of minimum cardinality. Let $T=\Phi_n(S)$. Then $T$ $\e/2$-spans $\orb_n(Y,G)$. To see this, let $\yy\in\orb_n(Y,G)$, and choose $\xx\in\Phi^{-1}_n(\yy)$. Since $S$ is an $(n,\delta)$-spanning set, there exists $\mathbf{s}\in S$ such that $D(\mathbf{s},\xx)<\delta$. Then $\Phi_n(\mathbf{s})\in T$, and it follows from the choice of $\delta$ that $D(\Phi_n(\mathbf{s}),\yy)<\e/2$.

Since $T$ is not necessarily a subset of $\orb_n(Y,G)$, it may not satisfy the definition of an $(n,\e/2)$-spanning set for $G$. However, we may use $T$ to construct an $(n,\e)$-spanning set for $G$. For each $\mathbf{t}\in T$, if the $D$-ball centered at $\mathbf{t}$ of radius $\e/2$ intersects $\orb_n(Y,G)$, then choose any $\mathbf{t}'$ in that intersection. Let $T'$ be the collection of all such points $\mathbf{t'}$, and note that $|T'|\leq|T|$. It follows from the triangle inequality that $T'$ is an $(n,\e)$-spanning set for $G$.

Therefore, for all $n\in\N$, \[r_{n,\delta}(F)=|S|\geq|T'|\geq r_{n,\e}(G).\] It follows that $h(F)\geq h(G)$.
\end{proof}

If two systems are topologically conjugate, then, in particular, each is topologically semi-conjugate to the other. Hence, the following theorem follows immediately from Theorem~\ref{Theorem semi-conjugacy}.

\begin{theorem}\label{Theorem conjugacy}
If $(X,F)$ and $(Y,G)$ are topologically conjugate dynamical systems, then $h(F)=h(G)$.
\end{theorem}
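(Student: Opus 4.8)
The plan is to deduce this immediately from Theorem~\ref{Theorem semi-conjugacy}, by observing that a topological conjugacy is, in \emph{both} directions, a topological semi-conjugacy. So the argument splits into two symmetric halves followed by combining the resulting inequalities.

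First I would fix a topological conjugacy $\varphi:X\to Y$, that is, a homeomorphism with $G\circ\varphi=\varphi\circ F$. Since $\varphi$ is in particular a continuous surjection and, for every $x\in X$, $G\circ\varphi(x)=\varphi\circ F(x)\se\varphi\circ F(x)$, the map $\varphi$ witnesses that $G$ is topologically semi-conjugate to $F$. Hence Theorem~\ref{Theorem semi-conjugacy} gives $h(G)\le h(F)$.

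Next I would check that $\varphi^{-1}:Y\to X$ is itself a topological conjugacy from $(Y,G)$ to $(X,F)$. Unwinding the definition of composition of set-valued functions, the identity $G\circ\varphi=\varphi\circ F$ says precisely that $G(\varphi(x))=\varphi(F(x))$ for all $x\in X$. Given $y\in Y$, write $x=\varphi^{-1}(y)$; then, using that $\varphi$ is a bijection, $\varphi^{-1}(G(y))=\varphi^{-1}\big(\varphi(F(x))\big)=F(x)=F(\varphi^{-1}(y))$. Thus $F\circ\varphi^{-1}=\varphi^{-1}\circ G$, so $\varphi^{-1}$ is a semi-conjugacy from $(Y,G)$ to $(X,F)$, and Theorem~\ref{Theorem semi-conjugacy} gives $h(F)\le h(G)$. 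Combining the two inequalities yields $h(F)=h(G)$.

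There is no genuine obstacle here; the only points needing a moment's care are the small bookkeeping with set-valued composition required to confirm that $\varphi^{-1}$ again intertwines the two systems, and the (trivial) remark that the equality $G\circ\varphi=\varphi\circ F$ automatically entails the containment demanded in the definition of semi-conjugacy.
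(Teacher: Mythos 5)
Your proposal is correct and follows exactly the route the paper takes: a conjugacy is a semi-conjugacy in each direction, so Theorem~\ref{Theorem semi-conjugacy} applied twice gives both inequalities. The paper states this in one sentence and omits the verification that $\varphi^{-1}$ intertwines $G$ and $F$, which you carry out explicitly and correctly.
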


\section{Topological Entropy of Iterates of a Set-valued Function}\label{Section Entropy of F^k}

One result concerning topological entropy of mappings which does not always hold in the context of upper semi-continuous set-valued functions is the relationship of the entropy of a function to the entropy of its iterates. In the setting of mappings on compact metric spaces, we have the following well-known result (see \cite[Theorem 7.10]{Walters-Book_ergodic_theory} for a proof).

\begin{theorem}\label{Theorem h(f^k)=kh(f)}
Let $X$ be a compact metric space, and let $f:X\rightarrow X$ be continuous. Then for all $k\in\N$, $h(f^k)=kh(f)$.
\end{theorem}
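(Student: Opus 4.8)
The plan is to compare separated sets for $f^k$ with separated sets for $f$, exploiting the fact that an orbit segment of length $kn$ under $f$ records an orbit segment of length $n$ under $f^k$ (the points at times $0,k,2k,\dots,(n-1)k$) together with the $k-1$ interpolating points between consecutive such times. Everything below goes through equally well with spanning sets and $r_{n,\eps}$ in place of separated sets and $s_{n,\eps}$, using the classical equivalence of the two.

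For the inequality $h(f^k)\le k\,h(f)$: if $S\se X$ is $(n,\eps)$-separated for $f^k$, then for $x\ne y$ in $S$ the index $i\in\{0,\dots,n-1\}$ realizing $\max_i d(f^{ki}(x),f^{ki}(y))\ge\eps$ gives a time $ki\in\{0,\dots,kn-1\}$ at which the two $f$-orbits are $\eps$-apart, so $S$ is also $(kn,\eps)$-separated for $f$. Hence $s_{n,\eps}(f^k)\le s_{kn,\eps}(f)$; taking $\tfrac1n\log$, then $\limsup_n$, and writing $\tfrac1n\log s_{kn,\eps}(f)=k\cdot\tfrac1{kn}\log s_{kn,\eps}(f)$, I obtain $h(f^k,\eps)\le k\,h(f,\eps)$, and letting $\eps\to0$ gives $h(f^k)\le k\,h(f)$.

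For the reverse inequality $k\,h(f)\le h(f^k)$ I would invoke uniform continuity of $f,f^2,\dots,f^{k-1}$ on the compact space $X$: given $\eps>0$, choose $\delta\in(0,\eps)$ with the property that $d(a,b)<\delta$ implies $d(f^j(a),f^j(b))<\eps$ for all $0\le j\le k-1$. Then if $S$ is $(kn,\eps)$-separated for $f$ and $x\ne y$ in $S$ are $\eps$-apart at some time $m=ki+j$ (with $0\le i\le n-1$, $0\le j\le k-1$), applying the contrapositive of the choice of $\delta$ to the points $f^{ki}(x),f^{ki}(y)$ shows $d\bigl((f^k)^i(x),(f^k)^i(y)\bigr)\ge\delta$; thus $S$ is $(n,\delta)$-separated for $f^k$, so $s_{kn,\eps}(f)\le s_{n,\delta}(f^k)$.

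The one genuinely delicate point — and the step I expect to be the main obstacle — is that this last inequality only controls $s_{m,\eps}(f)$ along the subsequence $m\equiv0\pmod k$, whereas $h(f,\eps)$ is a $\limsup$ over all $m$. To bridge the gap I would use that $n\mapsto s_{n,\eps}(f)$ is nondecreasing (an $(n,\eps)$-separated set is also $(n',\eps)$-separated for $n'\ge n$): for arbitrary $m$, write $m=kn+j$ with $0\le j<k$, so $s_{m,\eps}(f)\le s_{k(n+1),\eps}(f)\le s_{n+1,\delta}(f^k)$, whence $\tfrac1m\log s_{m,\eps}(f)\le\tfrac1{kn}\log s_{n+1,\delta}(f^k)$; taking $\limsup_{m\to\infty}$ (so $n\to\infty$ and $\tfrac{n+1}{n}\to1$) yields $h(f,\eps)\le\tfrac1k h(f^k,\delta)\le\tfrac1k h(f^k)$. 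Since this holds for every $\eps>0$, letting $\eps\to0$ gives $k\,h(f)\le h(f^k)$, and combining the two halves completes the proof.
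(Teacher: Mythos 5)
Your argument is correct and is essentially the standard proof of this classical fact (the one in Walters, Theorem 7.10), which the paper cites rather than reproduces: the easy direction by reindexing separated sets, and the reverse direction via uniform continuity of $f,\dots,f^{k-1}$ plus the monotonicity of $n\mapsto s_{n,\e}(f)$ to pass from the subsequence $m\equiv 0\pmod k$ to the full $\limsup$. It is worth noting that the uniform-continuity step is exactly where single-valuedness is used — for a set-valued $F$ the interpolating points of an orbit are not determined by the sampled points $x_0,x_k,x_{2k},\dots$ — which is why the paper's Section~\ref{Section Entropy of F^k} replaces it with the combinatorial Lemma~\ref{Lemma combinatorics} and only obtains $h(F)\leq h(F^k)\leq kh(F)$.
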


This need not hold in general for upper semi-continuous set-valued functions. However, we show in Theorem~\ref{Theorem bounding entropy} that for any topological dynamical system $(X,F)$ and any $k\in\N$, $h(F)\leq h(F^k)\leq kh(F)$. We begin with the following lemma.

\begin{lemma}\label{Lemma combinatorics}
Let $(X,F)$ be a topological dynamical system, $n\in\N$, $\e>0$, and $S$ an $(n,\e)$-separated set for $F$. Let $k,m\in\N$, such that $(m-1)k<n\leq mk$, and let $L=n-(m-1)k$. 

For each $i=0,\ldots,L-1$, let \[A_i=\{i,i+k,i+2k,\ldots,i+(m-1)k\},\] and for each $i=L,\ldots,k-1$, let \[A_i=\{i,i+k,i+2k,\ldots,i+(m-2)k\}.\] If, for each $i=0,\ldots,k-1$, $S_i$ is chosen to be the largest $\e/2$-separated subset of $\pi_{A_i}(S)$, then
\begin{equation}
|S|\leq\prod_{i=0}^{k-1}|S_i|\label{inequality 1}\nonumber
\end{equation}
\end{lemma}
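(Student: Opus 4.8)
The plan is to show that every element of $S$ is uniquely determined by its image under the product of projection maps $\pi_{A_0}\times\cdots\times\pi_{A_{k-1}}$ composed with a "coarsening" onto the separated subsets $S_i$, so that the map $S\to\prod_{i=0}^{k-1}S_i$ I will build is injective. Note first that the index sets $A_0,\ldots,A_{k-1}$ partition $\{0,1,\ldots,n-1\}$: the sets $A_i$ for $i<L$ each have $m$ elements and the sets $A_i$ for $i\geq L$ each have $m-1$ elements, and since $L=n-(m-1)k$ the total count is $Lm+(k-L)(m-1)=(m-1)k+L=n$; moreover the residues mod $k$ keep them disjoint, and every $j\in\{0,\ldots,n-1\}$ lies in $A_{j\bmod k}$ (the upper endpoints are chosen precisely so that no $A_i$ runs past $n-1$). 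Hence the map $\xx\mapsto(\pi_{A_0}(\xx),\ldots,\pi_{A_{k-1}}(\xx))$ is injective on $\prod_{j=0}^{n-1}X$, so in particular on $S$.

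Next I would replace each coordinate $\pi_{A_i}(\xx)$ by a representative in $S_i$. For each $i$, since $S_i$ is a \emph{maximal} $\e/2$-separated subset of $\pi_{A_i}(S)$, it is $\e/2$-spanning in $\pi_{A_i}(S)$ (with respect to the metric $D$ on the finite product over $A_i$): if some point of $\pi_{A_i}(S)$ were at $D$-distance $\geq\e/2$ from all of $S_i$, we could enlarge $S_i$, contradicting maximality. So for each $\xx\in S$ choose $\phi_i(\xx)\in S_i$ with $D(\pi_{A_i}(\xx),\phi_i(\xx))<\e/2$. This defines a map $\Phi\colon S\to\prod_{i=0}^{k-1}S_i$ by $\Phi(\xx)=(\phi_0(\xx),\ldots,\phi_{k-1}(\xx))$.

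The key step is to verify that $\Phi$ is injective; granting this, $|S|\leq|\Phi(S)|\leq\prod_{i=0}^{k-1}|S_i|$ and we are done. Suppose $\xx,\yy\in S$ with $\Phi(\xx)=\Phi(\yy)$. Then for every $i$ and every $j\in A_i$ we have $d(x_j,\phi_i(\xx)_j)<\e/2$ and $d(y_j,\phi_i(\yy)_j)=d(y_j,\phi_i(\xx)_j)<\e/2$, so by the triangle inequality $d(x_j,y_j)<\e$. Since the $A_i$ cover $\{0,\ldots,n-1\}$, this gives $d(x_j,y_j)<\e$ for all $0\leq j\leq n-1$, i.e. the $\e$-separation condition on $S$ fails unless $\xx=\yy$. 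Hence $\Phi$ is injective.

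The only delicate point — and the step I'd expect to need the most care in writing up — is the combinatorial bookkeeping that $A_0,\ldots,A_{k-1}$ genuinely partition $\{0,\ldots,n-1\}$ for the stated relation $(m-1)k<n\leq mk$ and $L=n-(m-1)k$; once that is nailed down, the separated/spanning duality for the $S_i$ and the triangle-inequality argument are routine. (There is a harmless abuse in that $\pi_{A_i}$ and the metric $D$ were defined on full products over intervals rather than over an arbitrary finite index set $A_i$, but $D$ extends in the obvious way as the max over $j\in A_i$ of $d(x_j,y_j)$, and this is all that is used.)
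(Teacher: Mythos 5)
Your proof is correct. The ingredients are the same as in the paper's proof --- the fact that $A_0,\dots,A_{k-1}$ partition $\{0,\dots,n-1\}$, the observation that a maximum-cardinality $\e/2$-separated subset of $\pi_{A_i}(S)$ must $\e/2$-span $\pi_{A_i}(S)$, and the triangle inequality played against the $\e$-separation of $S$ --- but you organize them differently, and more cleanly. The paper sets $T=\bigcap_{i=0}^{k-1}\pi_{A_i}^{-1}(S_i)$, which has cardinality $\prod_{i}|S_i|$, and proves $|S|\le|T|$ by the roundabout route of showing $|S\setminus T|\le|T\setminus S|$: to each $\xx\in S\setminus T$ it attaches a nonempty set $T(\xx)\se T\setminus S$ of points within $D$-distance $\e/2$ of $\xx$, and checks that these sets are pairwise disjoint (again via the triangle inequality). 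Your direct injection $\Phi\colon S\to\prod_{i}S_i$ packages the same idea without the set-difference bookkeeping: the paper's $T(\xx)$ is essentially the fiber of your $\Phi$ over $\Phi(\xx)$, and your injectivity check subsumes both the disjointness argument and the comparison of $|S\setminus T|$ with $|T\setminus S|$. The one point you flag as delicate --- that the $A_i$ genuinely partition $\{0,\dots,n-1\}$ --- does check out: the total count is $Lm+(k-L)(m-1)=(m-1)k+L=n$, the classes are distinguished by residue mod $k$, and the cutoff at $i=L$ is exactly what keeps the largest element of each $A_i$ at most $n-1$. The paper uses this fact implicitly (its claim that $\mathbf{z}\in T(\xx)$ implies $D(\xx,\mathbf{z})<\e/2$ needs it) without ever verifying it, so your writeup is if anything more complete on this point.
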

\begin{proof}

Define  $T\se X^n$ to be the set \[T=\bigcap_{i=0}^{k-1}\pi_{A_i}^{-1}(S_i).\] Then \[|T|=\prod_{i=0}^{k-1}\left|S_i\right|.\] Now, $T$ is not necessarily a subset of $S$ (or even of $\orb_n(X,F)$) nor is $S$ necessarily a subset of $T$. However, we will show that $|S|\leq|T|$ by demonstrating that $|S\setminus T|\leq|T\setminus S|$.

Suppose $\xx\in S\setminus T$. For each $j=0,\ldots,k-1$, consider the point $\pi_{A_j}(\xx)$, and define \[T_j(\xx)=\left\{\yy\in S_j:D\left(\yy,\pi_{A_j}(\xx)\right)<\frac{\e}{2}\right\}.\] Since $\xx$ is not in $T$, there is some $0\leq j\leq k-1$ such that $\pi_{A_j}(\xx)\notin S_j$, and hence $\pi_{A_j}(\xx)\notin T_j(\xx)$. However, since $S_j$ is the largest $\e/2$-separated subset of $\pi_{A_j}(S)$, it follows that $T_j(\xx)\neq\emptyset$ for each $0\leq j\leq k-1$. 

Now define \[T(\xx)=\bigcap_{i=1}^{k-1}\pi_{A_i}^{-1}\left[T_i(\xx)\right].\] Then for each $\mathbf{z}\in T(\mathbf{x})$, $D(\xx,\mathbf{z})<\e/2$. Hence, since $\xx\in S$, and $S$ is $\e$-separated, $\mathbf{z}\notin S$. Since this holds for all $\mathbf{z}\in T(\xx)$, we have that $T(\xx)\cap S=\emptyset$. Moreover, since for each $0\leq j\leq k-1$, $|T_j(\xx)|\geq1$, it follows that $|T(\xx)|\geq1$. Hence, for each point $\xx\in S\setminus T$, there is at least one point $\mathbf{z}\in T(\xx)\setminus S\se T\setminus S$.

Finally, if $\xx,\yy\in S\setminus T$, then $T(\xx)\cap T(\yy)=\emptyset$. This is because if there were a sequence $\mathbf{z}$ in $T(\xx)\cap T(\yy)$, then $D(\xx,\yy)\leq D(\xx,\mathbf{z})+D(\yy,\mathbf{z})<\e$ which would contradict $S$ being $\e$-separated. Therefore, we have that $|T\setminus S|\geq|S\setminus T|$, and the result follows.

%
\end{proof}

\begin{lemma}\label{Lemma bounding entropy}
Let $(X,F)$ be a topological dynamical system, and let $k\in\N$. Then for all $n\in\N$ and $\e>0$, if $m\in\N$ is chosen such that $(m-1)k<n\leq mk$, then \[s_{n,\e}(F)\leq\left[s_{m,\e/2}\left(F^k\right)\right]^k.\]
\end{lemma}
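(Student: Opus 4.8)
The plan is to deduce this lemma directly from the combinatorial estimate in Lemma~\ref{Lemma combinatorics} by choosing the decomposition $\{A_i\}_{i=0}^{k-1}$ and then bounding each factor $|S_i|$ by $s_{m,\e/2}(F^k)$. First I would fix $n\in\N$, $\e>0$, and $k\in\N$, let $m\in\N$ satisfy $(m-1)k<n\leq mk$, and let $S\se\orb_n(X,F)$ be an $(n,\e)$-separated set of maximal cardinality, so $|S|=s_{n,\e}(F)$. Set $L=n-(m-1)k$ and define the arithmetic-progression index sets $A_i$ exactly as in Lemma~\ref{Lemma combinatorics}: for $i=0,\dots,L-1$ the set $A_i$ has $m$ elements, and for $i=L,\dots,k-1$ it has $m-1$ elements. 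For each $i$, choose $S_i$ to be a largest $\e/2$-separated subset of $\pi_{A_i}(S)$. Lemma~\ref{Lemma combinatorics} then gives $|S|\leq\prod_{i=0}^{k-1}|S_i|$, so it suffices to show $|S_i|\leq s_{m,\e/2}(F^k)$ for every $i$.

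The key step is the observation that for any $n$-orbit $(x_0,\dots,x_{n-1})$ for $F$ and any starting index $i$, the subsequence $(x_i,x_{i+k},x_{i+2k},\dots)$ (as far as it extends within $\{0,\dots,n-1\}$) is an orbit for $F^k$: indeed $x_{(j+1)k+i}\in F^k(x_{jk+i})$ follows by composing $F$ with itself $k$ times along the consecutive steps $x_{jk+i},x_{jk+i+1},\dots,x_{(j+1)k+i}$, using the definition $F^k=F\circ F^{k-1}$ and the definition of composition of set-valued functions. Hence $\pi_{A_i}(S)\se\orb_{|A_i|}(X,F^k)$, and since $|A_i|\le m$, every $\e/2$-separated subset of $\pi_{A_i}(S)$ is (after padding, or simply by monotonicity of $s_{\cdot,\e/2}$ in the length parameter, which I would note holds because any $|A_i|$-orbit for $F^k$ extends to an $m$-orbit when $F^k$ is surjective on $C$, or more directly because a separated set of shorter orbits projects from a separated set of longer ones) of cardinality at most $s_{m,\e/2}(F^k)$. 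In the case $|A_i|=m$ this is immediate; in the case $|A_i|=m-1$ I would either invoke the elementary fact $s_{m-1,\e/2}(F^k)\le s_{m,\e/2}(F^k)$ or simply absorb it, since the $A_i$ with $m$ elements already force the exponent and the shorter blocks only help.

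Putting these together, $|S_i|\le s_{m,\e/2}(F^k)$ for each of the $k$ values of $i$, so $s_{n,\e}(F)=|S|\le\prod_{i=0}^{k-1}|S_i|\le\bigl[s_{m,\e/2}(F^k)\bigr]^k$, which is the claimed inequality.

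I expect the main obstacle to be a small bookkeeping subtlety rather than a conceptual one: making sure the monotonicity statement $s_{m-1,\e/2}(F^k)\le s_{m,\e/2}(F^k)$ is either justified or circumvented cleanly. Every $(m-1,\e/2)$-separated set of $(m-1)$-orbits for $F^k$ need not literally sit inside $\orb_m(X,F^k)$, so I would handle this by noting that separatedness is witnessed in one coordinate among the first $m-1$, and that the bound ultimately only needs the blocks $A_0,\dots,A_{L-1}$ of full length $m$ to control the exponent $k$ — or, more carefully, by first passing to the surjective core via Lemma~\ref{Lemma surjective core} so that short orbits always extend. Aside from that, the argument is a direct assembly of Lemma~\ref{Lemma combinatorics} with the elementary ``every $k$-th term of an $F$-orbit is an $F^k$-orbit'' fact.
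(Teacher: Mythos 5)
Your argument is correct and is essentially the paper's own proof: the same index sets $A_i$, the same appeal to Lemma~\ref{Lemma combinatorics}, and the same identification of each $S_i$ as an $(m,\e/2)$- or $(m-1,\e/2)$-separated set for $F^k$ (the paper simply asserts $|S_i|\le s_{m,\e/2}(F^k)$ in both cases). The monotonicity worry you flag is harmless and needs no surjectivity or passage to the core: since $F^k(x)$ is a nonempty compact set for every $x$, any $(m-1)$-orbit for $F^k$ extends forward to an $m$-orbit, and separation (witnessed in one of the first $m-1$ coordinates) is preserved, so $s_{m-1,\e/2}(F^k)\le s_{m,\e/2}(F^k)$.
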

\begin{proof}
Let $n\in\N$ and $\e>0$, and fix $m\in\N$ such that $(m-1)k<n\leq mk$. Let $S$ be an $(n,\e)$-separated set for $F$ of maximal cardinality, and let $L=n-(m-1)k$. For each $i=0,\ldots,L-1$, let $A_i=\{i,i+k,i+2k,\ldots,i+(m-1)k\}$, and for each $i=L,\ldots,k-1$, let $A_i=\{i,i+k,i+2k,\ldots,i+(m-2)k\}$.

For each $i=1,\ldots,k-1$ choose $S_i$ to be the largest $\e/2$-separated subset of $\pi_{A_i}(S)$. By Lemma~\ref{Lemma combinatorics}, \[|S|\leq\prod_{i=0}^{k-1}|S_i|.\]

Moreover, for $i=0,\ldots,L-1$, $S_i$ is an $(m,\e/2)$-separated set for $F^k$, and for $i=L,\ldots,k-1$, $S_i$ is an $(m-1,\e/2)$-separated set for $F^k$. In either case, we have that $|S_i|\leq s_{m,\e/2}(F^k)$. Therefore \[s_{n,\e}(F)=|S|\leq\prod_{i=0}^{k-1}|S_i|\leq\left[s_{m,\e/2}\left(F^k\right)\right]^k.\]
\end{proof}

\begin{theorem}\label{Theorem bounding entropy}
Let $(X,F)$ be a topological dynamical system, and let $k\in\N$. Then \[h(F)\leq h\left(F^k\right)\leq kh(F).\]
\end{theorem}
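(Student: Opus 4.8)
The plan is to prove the two inequalities separately, the upper bound $h(F^k)\leq kh(F)$ by a direct ``lifting'' of separated sets and the lower bound $h(F)\leq h(F^k)$ as an immediate consequence of Lemma~\ref{Lemma bounding entropy}.

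For the upper bound, the key observation is that $y_{i+1}\in F^k(y_i)$ holds precisely when there is a $(k+1)$-orbit for $F$ from $y_i$ to $y_{i+1}$. Given an $(n,\e)$-separated set $S$ for $F^k$, I would extend each $n$-orbit $(y_0,\dots,y_{n-1})$ for $F^k$ to an $N$-orbit for $F$, with $N=(n-1)k+1$, by choosing such a $(k+1)$-orbit for each consecutive pair and concatenating; the resulting $N$-orbit has its coordinates at positions $0,k,2k,\dots,(n-1)k$ equal to $y_0,y_1,\dots,y_{n-1}$. Since distinct elements of $S$ already differ in one of these sampled coordinates, the lifted orbits are distinct and the lifted set is again $\e$-separated in $\orb_N(X,F)$, so $s_{n,\e}(F^k)\leq s_{(n-1)k+1,\e}(F)$. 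Writing $\tfrac1n\log s_{(n-1)k+1,\e}(F)=\tfrac{(n-1)k+1}{n}\cdot\tfrac{1}{(n-1)k+1}\log s_{(n-1)k+1,\e}(F)$, using $\tfrac{(n-1)k+1}{n}\to k$ and that $\tfrac1N\log s_{N,\e}(F)\geq 0$, one passes to the $\limsup$ to get $h(F^k,\e)\leq k\,h(F,\e)$, and then $\e\to0$ gives $h(F^k)\leq kh(F)$.

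For the lower bound, fix $\e>0$, and for each $n$ let $m\in\N$ satisfy $(m-1)k<n\leq mk$. Lemma~\ref{Lemma bounding entropy} gives $s_{n,\e}(F)\leq[s_{m,\e/2}(F^k)]^k$, hence, using $(m-1)k<n$ and $\log s_{m,\e/2}(F^k)\geq 0$,
\[
\frac1n\log s_{n,\e}(F)\leq \frac{k}{n}\log s_{m,\e/2}(F^k)\leq \frac{1}{m-1}\log s_{m,\e/2}(F^k).
\]
Since $m\to\infty$ as $n\to\infty$ and $\tfrac{m}{m-1}\to 1$, taking the $\limsup$ over $n$ of the right-hand side yields at most $h(F^k,\e/2)$, so $h(F,\e)\leq h(F^k,\e/2)$, and $\e\to0$ gives $h(F)\leq h(F^k)$.

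The two ``$\limsup$ bookkeeping'' steps — rescaling by a factor tending to $1$ (resp.\ $k$) and using nonnegativity of $\tfrac1n\log s_{n,\e}$ — require only a small $\eta$-argument and behave correctly even when the entropy is infinite, so I regard them as routine. The one genuinely new ingredient is the lifting construction for the upper bound, and that is where I would be most careful: I must verify that the concatenated sequence is a legitimate element of $\orb_{(n-1)k+1}(X,F)$ and that applying $\pi_{\{0,k,\dots,(n-1)k\}}$ recovers the original $F^k$-orbit, which is exactly what prevents the separated set from collapsing under the lift.
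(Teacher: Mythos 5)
Your proof is correct and follows essentially the same route as the paper: the upper bound is obtained by lifting each $(n,\e)$-separated set for $F^k$ to an $\e$-separated set of longer orbits for $F$ (the paper extends to $nk$-orbits rather than your $\bigl((n-1)k+1\bigr)$-orbits, an immaterial difference), and the lower bound comes directly from Lemma~\ref{Lemma bounding entropy} with the same $\limsup$ rescaling. Your extra care with the nonnegativity of $\frac{1}{n}\log s_{n,\e}$ and the factors tending to $k$ and to $1$ just makes explicit what the paper leaves implicit.
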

\begin{proof}
First, to show that $h(F^k)\leq kh(F)$, let $n\in\N$, and let $S$ be an $(n,\e)$-separated set for $F^k$ of maximal cardinality. For each $(x_0,\ldots,x_{n-1})\in S$, choose $(y_0,\ldots,y_{nk-1})\in\orb_{nk}(F,X)$ such that for each $i=1,\ldots,n-1$, $y_{ik}=x_i$, and let $\widetilde{S}$ be the set of all such $nk$-orbits for $F$.

Then $\widetilde{S}$ is an $(nk,\e)$-separated set for $F$ with the same cardinality as $S$ but not necessarily of maximal cardinality. It follows that \[
s_{n,\e}\left(F^k\right)\leq s_{nk,\e}(F)\] and hence
\[\limsup_{n\rightarrow\infty}\frac{1}{n}\log s_{n,\e}\left(F^k\right) \leq k\limsup_{n\rightarrow\infty}\frac{1}{nk}\log s_{nk,\e}(F).\] Therefore $h(F^k)\leq kh(F)$.

To show the other inequality, note that from Lemma~\ref{Lemma bounding entropy}, if $n\in\N$, and $m\in\N$ is chosen so that $(m-1)k<n\leq mk$, then \[s_{n,\e}(F)\leq\left[s_{m,\e/2}\left(F^k\right)\right]^k.\] In this construction, $m\rightarrow\infty$ as $n\rightarrow\infty$, so \[\limsup_{n\rightarrow\infty}\frac{1}{n}\log s_{n,\e}(F)\leq\limsup_{m\rightarrow\infty}\frac{1}{n}\log\left[s_{m,\e/2}\left(F^k\right)\right]^k=\limsup_{m\rightarrow\infty}\frac{\alpha}{m}\log s_{m,\e/2}\left(F^k\right)\] where $\alpha=mk/n$. 

It follows from the inequality, $(m-1)k<n\leq mk$ that $\alpha\rightarrow 1$ as $n\rightarrow\infty$. Hence, we have that $h(F)\leq h(F^k)$.
\end{proof}

\begin{corollary}\label{Corollary iterates}
Let $(X,F)$ be a topological dynamical system, and let $k\in\N$. Then the following hold.
\begin{enumerate}
\item $h(F)=0$ if, and only if, $h(F^k)=0$.
\item $h(F)=\infty$ if, and only if, $h(F^k)=\infty$.
\item $0<h(F)<\infty$ if, and only if, $0<h(F^k)<\infty$.
\end{enumerate}
\end{corollary}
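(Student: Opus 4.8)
The plan is to derive all three biconditionals as immediate formal consequences of the sandwich inequality $h(F)\leq h(F^k)\leq kh(F)$ established in Theorem~\ref{Theorem bounding entropy}, together with the fact that $h(F)\in[0,\infty]$. First I would fix $k\in\N$ and record that $h(F)$ is a nonnegative element of the extended reals; the inequality $h(F)\leq h(F^k)$ shows that $h(F^k)$ dominates $h(F)$, and the inequality $h(F^k)\leq kh(F)$ shows it is controlled by a finite multiple of $h(F)$ (with the usual conventions $k\cdot 0=0$ and $k\cdot\infty=\infty$).

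For part~(1): if $h(F)=0$, then $h(F^k)\leq kh(F)=0$, and since entropy is nonnegative, $h(F^k)=0$. Conversely, if $h(F^k)=0$, then $h(F)\leq h(F^k)=0$, so $h(F)=0$. For part~(2): if $h(F)=\infty$, then $h(F^k)\geq h(F)=\infty$, so $h(F^k)=\infty$; conversely, if $h(F^k)=\infty$, then $kh(F)\geq h(F^k)=\infty$ forces $h(F)=\infty$, since $kh(F)$ is finite whenever $h(F)$ is finite. For part~(3): this is the logical negation of the disjunction of the conditions "$h(F)=0$" and "$h(F)=\infty$" (and similarly for $F^k$), so it follows by combining parts~(1) and~(2) — $0<h(F)<\infty$ holds iff neither $h(F)=0$ nor $h(F)=\infty$, iff neither $h(F^k)=0$ nor $h(F^k)=\infty$, iff $0<h(F^k)<\infty$.

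I do not anticipate a genuine obstacle here, since the corollary is purely a bookkeeping consequence of the two-sided bound; the only point requiring a moment's care is the handling of the extended-real arithmetic in the case $h(F)=\infty$ (ensuring the statement "$kh(F)$ is finite whenever $h(F)$ is" is invoked correctly, i.e.\ contrapositively), which I would state explicitly so the argument is airtight.
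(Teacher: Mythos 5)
Your proof is correct and is exactly the intended argument: the paper states this as an immediate corollary of the sandwich inequality $h(F)\leq h(F^k)\leq kh(F)$ from Theorem~\ref{Theorem bounding entropy} and gives no further proof. Your explicit handling of the extended-real arithmetic in the $\infty$ case is careful and sound.
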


The inequality $h(F)\leq h(F^k)\leq kh(F)$ is most interesting when the entropy of $F$ is positive and finite. From Theorem~\ref{Theorem h(f^k)=kh(f)}, we have that for any mapping $f$, $h(f^k)=kh(f)$ for all $k\in\N$. Next, we give an example of two set-valued functions on the two element set $\{0,1\}$: one where $h(F^2)=h(F)$, and one where $h(F)<h(F^2)<2h(F)$.

\begin{example}\label{Example bounded entropy}
Let $X=\{0,1\}$.
\begin{enumerate}
\item Let $F:X\rightarrow2^X$ be defined by $F(0)=\{1\}$, and $F(1)=\{0,1\}$.  Then $h(F)=\log\varphi$, where $\varphi=(1+\sqrt{5})/2$, and $h(F^2)=\log 2$.

\item  Let $G:X\rightarrow2^X$ be defined by $G(0)=G(1)=\{0,1\}$. Then for all $k\in\N$, $h(G^k)=h(G)=\log 2$.
\end{enumerate}
\end{example}
\begin{proof}
Note that if $0<\e<1$, then for all $n\in\N$, the entire space of $n$-orbits is an $(n,\e)$-separated set (for $F$ and $G$ respectively).

For $F$, the sequence $(s_{n,\e})_{n=1}^\infty$ is a Fibonacci sequence beginning with $(2,3)$. Thus, $s_{n,\e}\approx5^{-1/2}\varphi^{n+2}$, and we have that $h(F)=\log\varphi$.

Now $F^2(0)=F^2(1)=\{0,1\}$, so $\orb(X,F^2)=\{0,1\}^\Z$, and the entropy of the shift on this space is $\log 2$. Thus $h(F^2)=\log2$ which is strictly between $h(F)$ and $2h(F)$.

Note that $G=F^2$, so we have that $h(G)=\log2$. Also, for any $k\in\N$, $G^k=G$, so, in particular, $h(G^k)=h(G)$.
\end{proof}

In this example, we had that $G^k=G$ for all $k\in\N$. This is not necessary, however, for their entropies to be equal. In the following example we present a function $F:[0,1]\rightarrow2^{[0,1]}$ for which $F^2\neq F$ but $h(F^2)=h(F)$. (The inverse limits of $F$ and $F^2$ are discussed in \cite[Example 4]{Ingram_Mahavier-Invlims_paper}.)

\begin{example}\label{Example h(F)=h(F^2) but F neq F^2}
Let $I=[0,1]$, and let $F:I\rightarrow2^I$ be defined by 
\[
F(x)=\begin{cases}
\left\{x+\frac{1}{2},\frac{1}{2}-x\right\} & x\leq\frac{1}{2}\\
\left\{x-\frac{1}{2},\frac{3}{2}-x\right\} & x\geq\frac{1}{2}
\end{cases}
\]
Then, $F^2\neq F$, but $h(F^2)=h(F)=\log2$. (The graphs of $F$ and $F^2$ are pictured in Figure~\ref{Figure diamond and cross}.)
\end{example}
\begin{proof}
For each $0<\e<1/4$, let $A_\e$ be the largest $\e$-separated subset of the set \[\left[0+\frac{\e}{2},\frac{1}{2}-\frac{\e}{2}\right]\cup\left[\frac{1}{2}+\frac{\e}{2},1-\frac{\e}{2}\right].\] Note that the cardinality of $A_\e$ is no more than three less than the largest cardinality for an $\e$-separated subset of $I$. 

Moreover, for each $a\in A_\e$, $F(a)$ contains exactly two points, and those points are at least $\e$ apart from each other. It follows that for each $n\in\N$, \[|A_\e|2^n\leq s_{n,\e}(F)\leq\left(|A_\e|+3\right)2^n,\] and thus, $h(F)=\log2$.

A similar argument shows that that $h(F^2)=\log2$.
\end{proof}

\begin{figure}
\begin{minipage}{.49\textwidth}
\centering
\begin{tikzpicture}[scale=5]
\draw[dotted] (0,0) node[left]{0} -- (0,1) node[left]{1} -- (1,1) -- (1,0) node[below]{1} -- (0,0) node[below]{0};
\draw[very thick, blue, join=bevel] (0,1/2)--(1/2,1)--(1,1/2)--(1/2,0)--(0,1/2);
\end{tikzpicture}
\end{minipage}
\begin{minipage}{.49\textwidth}
\centering
\begin{tikzpicture}[scale=5]
\draw[dotted] (0,0) node[left]{0} -- (0,1) node[left]{1} -- (1,1) -- (1,0) node[below]{1} -- (0,0) node[below]{0};
\draw[very thick, blue] (0,0)--(1,1);
\draw[very thick, blue] (0,1)--(1,0);
\end{tikzpicture}
\end{minipage}
\caption{Set-valued function $F$ (left) and $F^2$ (right) from Example~\ref{Example h(F)=h(F^2) but F neq F^2}}\label{Figure diamond and cross}
\end{figure}
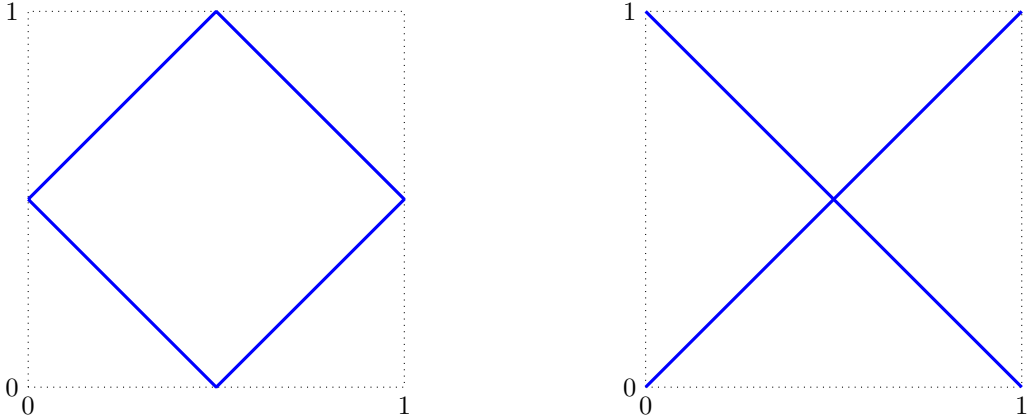

\section{Positive Topological Entropy}\label{Section positive entropy}

Each of the examples from Section~\ref{Section Entropy of F^k} illustrates functions with positive topological entropy, where the positive entropy may be witnessed on any compact subset. An interesting question is to determine ``minimal'' conditions for a set-valued function to have positive entropy. In this section, we establish conditions which are sufficient for a set-valued function to have positive entropy, and we demonstrate that set-valued functions satisfying these conditions may exhibit seemingly minimal chaotic behavior. 

We also discuss the relationship between periodicity and positive topological entropy. A mapping on $[0,1]$ has positive topological entropy if, and only if, it has a periodic point whose period is not a power of 2. We demonstrate that this equivalence does not hold for set-valued functions on the interval.

We begin with sufficient conditions for a set-valued function to have positive topological entropy.

\begin{proposition}\label{Proposition positive entropy box}
Let $(X,F)$ be a topological dynamical system.  Let $a,b\in X$, with $a\neq b$. If $\{a,b\}\se F(a)$ and $\{a,b\}\se F(b)$, then $h(F)\geq\log2$.
\end{proposition}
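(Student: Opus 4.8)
The plan is to encode binary strings as $n$-orbits. Set $\e_0 = d(a,b)$, which is positive since $a \neq b$. Fix $n \in \N$. For each word $w = (w_0, w_1, \ldots, w_{n-1}) \in \{a,b\}^n$, the hypothesis $\{a,b\} \se F(a)$ and $\{a,b\} \se F(b)$ guarantees that $w_{i+1} \in F(w_i)$ for every $i = 0, \ldots, n-2$; hence $w$ is itself an $n$-orbit for $F$, i.e. $w \in \orb_n(X,F)$. Let $S_n$ be the set of all such $n$-orbits, so $|S_n| = 2^n$.

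Next I would check that $S_n$ is $\e$-separated (in the metric $D$ on $\prod_{i=0}^{n-1} X$) for every $0 < \e \le \e_0$. If $v, w \in S_n$ are distinct binary words, they disagree in some coordinate $j$, and there $\{v_j, w_j\} = \{a,b\}$, so
\[
D(v,w) = \max_{0 \le i \le n-1} d(v_i, w_i) \ge d(v_j, w_j) = d(a,b) = \e_0 \ge \e.
\]
Therefore $S_n$ is an $(n,\e)$-separated set for $F$ in the sense of Definition~\ref{set-valued (n,e)-separated}, and consequently $s_{n,\e}(F) \ge |S_n| = 2^n$.

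Finally I would pass to the entropy. For each fixed $\e$ with $0 < \e \le \e_0$,
\[
h(F,\e) = \limsup_{n\to\infty} \frac{1}{n}\log s_{n,\e}(F) \ge \limsup_{n\to\infty} \frac{1}{n}\log 2^n = \log 2,
\]
and since $h(F,\e)$ is monotone in $\e$, taking $\e \to 0$ gives $h(F) = \lim_{\e\to 0} h(F,\e) \ge \log 2$.

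There is no real obstacle here: the only point requiring a little care is that the "separation" lives in the $n$-orbit space $\orb_n(X,F)$ with the sup-metric $D$, so one must observe that two distinct binary words, viewed as $n$-orbits, are automatically far apart in $D$ because they differ in some coordinate by exactly $d(a,b)$. Everything else is a direct application of the definitions of $s_{n,\e}(F)$, $h(F,\e)$, and $h(F)$.
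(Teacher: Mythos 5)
Your argument is correct and is essentially identical to the paper's proof: both take the set $\{a,b\}^n\se\orb_n(X,F)$ as an $(n,\e)$-separated set for $\e$ at most $d(a,b)$, conclude $s_{n,\e}\geq 2^n$, and pass to the limit. The extra care you take in verifying that binary words are genuine $n$-orbits and are $D$-separated is exactly what the paper leaves implicit.
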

\begin{proof}
For each $n\in\N$ and each $0<\e<d(a,b)$, the set $\{a,b\}^n\se\orb_n(X,F)$ is an $(n,\e)$-separated set. Thus, $s_{n,\e}\geq2^n$. It follows that $h(F)\geq\log2$.
\end{proof}

Under the assumptions of Proposition~\ref{Proposition positive entropy box}, $a$ has two distinct periodic orbits, $(a,a,a,\ldots)$ and $(a,b,a,b,\ldots)$. The next theorem generalizes Proposition~\ref{Proposition positive entropy box} by focusing on this property.

In this theorem, given two finite sequences $\mathbf{u}=(u_i)_{i=0}^n$ and $\mathbf{v}=(v_i)_{i=0}^n$, we define $\mathbf{uv}$ to be the sequence $(u_0,\ldots,u_n,v_0,\ldots,v_n)$. We also define a \emph{finite word of length $m$} from $\{\mathbf{u},\mathbf{v}\}$ to be a sequence of the form $\mathbf{a_1a_2\cdots a_m}$ where for each $1\leq j\leq m$, $\mathbf{a_j}\in\{\mathbf{u},\mathbf{v}\}$.

\begin{theorem}\label{Theorem two periodic orbits}
Let $(X,F)$ be a topological dynamical system.  Suppose there exists a point $p\in X$ and two distinct periodic orbits $\mathbf{a}$ and $\mathbf{b}$ such that $a_0=b_0=p$.  Then $h(F)>0$.
\end{theorem}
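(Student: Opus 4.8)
The plan is to produce, for arbitrarily long finite blocks, an exponentially large family of pairwise $\e$-separated $n$-orbits for $F$ by freely concatenating copies of the two given periodic orbits, and then to read off positivity of the entropy from the resulting lower bound on $s_{n,\e}(F)$.

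First I would reduce to blocks of equal length. Let $m\in\N$ be a common multiple of the periods of $\mathbf a$ and $\mathbf b$ (e.g.\ their product), and set $\mathbf u=(a_0,a_1,\ldots,a_{m-1})$ and $\mathbf v=(b_0,b_1,\ldots,b_{m-1})$. Each is an $m$-orbit for $F$, being an initial segment of the full orbit $\mathbf a$ (resp.\ $\mathbf b$); both begin with $a_0=b_0=p$; and since $m$ is a multiple of each period, $a_m=a_0=p$ and $b_m=b_0=p$, so $p\in F(a_{m-1})$ and $p\in F(b_{m-1})$. Consequently any finite word $\mathbf{a_1a_2\cdots a_\ell}$ from $\{\mathbf u,\mathbf v\}$ is a genuine $\ell m$-orbit for $F$: within each block the orbit relation holds because $\mathbf a$ and $\mathbf b$ are full orbits, and at each junction the last entry of the preceding block ($a_{m-1}$ or $b_{m-1}$) maps onto $p$, the first entry of the next block.

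Next I would verify $\mathbf u\neq\mathbf v$: if $\mathbf u=\mathbf v$ then $a_i=b_i$ for $0\le i\le m-1$, and since $m$ is a multiple of both periods, periodicity propagates this to $a_j=b_j$ for all $j\in\Z$, contradicting $\mathbf a\neq\mathbf b$. Put $\e_0=\min\{d(u_i,v_i):0\le i\le m-1,\ u_i\neq v_i\}>0$. Then for every $\ell\in\N$ and every $0<\e\le\e_0$, the $2^\ell$ words of length $\ell$ yield $2^\ell$ distinct $\ell m$-orbits that are pairwise $\e$-separated in $\orb_{\ell m}(X,F)$: two distinct words first disagree at some block, where one has $\mathbf u$ and the other $\mathbf v$, and these differ in a coordinate at distance at least $\e_0\ge\e$, so the corresponding orbits are at $D$-distance at least $\e$. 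Hence $s_{\ell m,\e}(F)\ge 2^\ell$, and therefore, for $0<\e\le\e_0$,
\[
h(F,\e)=\limsup_{n\to\infty}\frac1n\log s_{n,\e}(F)\ \ge\ \limsup_{\ell\to\infty}\frac{1}{\ell m}\log s_{\ell m,\e}(F)\ \ge\ \frac{\log 2}{m}.
\]
Since $s_{n,\e}(F)$ is nonincreasing in $\e$, the same holds for $h(F,\e)$, so $h(F)=\lim_{\e\to0}h(F,\e)\ge(\log 2)/m>0$.

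No step is a genuine obstacle; the construction is direct. The one point that requires care is the reduction to equal-length blocks together with the verification that the two resulting blocks $\mathbf u,\mathbf v$ are distinct — without that, the $2^\ell$ words need not produce $2^\ell$ distinct (hence separated) orbits, and the whole counting collapses. (This also recovers Proposition~\ref{Proposition positive entropy box}, the case $m=2$, where one even gets $h(F)\ge\log 2$; in general the bound $(\log 2)/m$ is weaker, consistent with the theorem asserting only positivity.)
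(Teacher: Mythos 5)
Your proof is correct and follows essentially the same route as the paper: form equal-length blocks $\mathbf u,\mathbf v$ from a common multiple of the two periods, use the return to $p$ at block boundaries to concatenate freely, and count $2^\ell$ pairwise $\e$-separated $\ell m$-orbits to conclude $h(F)\ge(\log 2)/m>0$. Your explicit verification that $\mathbf u\neq\mathbf v$ (by propagating periodicity) is a nice touch that the paper passes over more quickly, but the argument is the same.
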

\begin{proof}
Let $m$ be the period of $\mathbf{a}$, let $k$ be the period of $\mathbf{b}$, and let $l$ be the least common multiple of $m$ and $k$. Let $\mathbf{u}=(a_0,\ldots,a_{l-1})$, and let $\mathbf{v}=(b_0,\ldots,b_{l-1})$. Note that $p\in F(a_{l-1})$ and $p\in F(b_{l-1})$, so any finite word from $\{\mathbf{u},\mathbf{v}\}$ is a finite orbit for $F$. Also, since $\mathbf{a}$ and $\mathbf{b}$ are not equal, neither are $\mathbf{u}$ and $\mathbf{v}$, so there exists $0\leq j\leq l-1$ such that $u_j\neq v_j$.

For each $n\in\N$, let $S_n$ be the set of all finite words of length $n$ from $\{\mathbf{u},\mathbf{v}\}$. Then $S_n$ is an $nl$-orbit. Moreover, if $0<\e<d(u_j,v_j)$, then $S_n$ is an $(nl,\e)$-separated set, and $|S_n|=2^n$. It follows that $s_{nl,\e}\geq 2^n$, and hence, $h(F)\geq (\log2)/l>0$.
\end{proof}

\begin{example}\label{Example positive entropy on a nowhere dense set}
Let $I=[0,1]$, and let $F:I\rightarrow2^I$ be defined by $F(x)=\{x\}$ for $0<x<1$, and $F(0)=F(1)=\{0,1\}$ (pictured in Figure~\ref{Figure positive entropy on a nowhere dense set}). Then, according to Proposition~\ref{Proposition positive entropy box}, $h(F)>0$, and, in fact, $h(F)=\log2$.
\end{example}

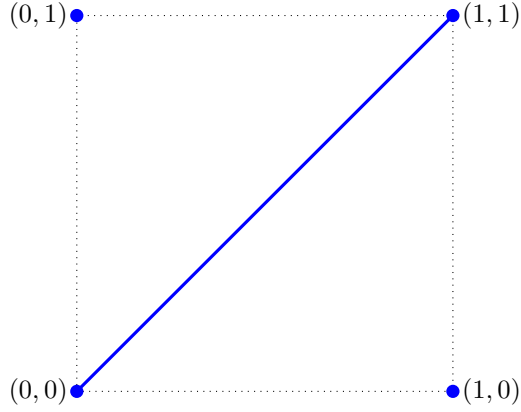
\begin{figure}
\begin{center}
\begin{tikzpicture}[scale=5]
\draw[dotted] (0,0) node[left]{$(0,0)$} -- (0,1) node[left]{$(0,1)$} -- (1,1) node[right] {$(1,1)$} -- (1,0) node[right]{$(1,0)$} -- (0,0);
 \draw (0,1) node[circle, fill=blue, inner sep=0, minimum size=5pt]{};
 \draw (1,0) node[circle, fill=blue, inner sep=0, minimum size=5pt]{};
 \draw (0,0) node[circle, fill=blue, inner sep=0, minimum size=5pt]{};
 \draw (1,1) node[circle, fill=blue, inner sep=0, minimum size=5pt]{};
\draw[very thick, blue](0,0) -- (1,1);
\end{tikzpicture}
\caption{Set-valued function from Example~\ref{Example positive entropy on a nowhere dense set}.}\label{Figure positive entropy on a nowhere dense set}
\end{center}
\end{figure}

One thing which makes Example~\ref{Example positive entropy on a nowhere dense set} interesting is the fact that the positive entropy is really only taking place over the nowhere dense set $\{0,1\}$. Our next two results illustrate that such a thing cannot happen in the context of mappings, or even with continuous set-valued functions.

The following proposition can be found within the proof of a theorem due to Jaquette \cite{Jaquette-Existence_of_top_entropy_preserving_subsystems}. We state the result in a slightly different way than how it appears in \cite{Jaquette-Existence_of_top_entropy_preserving_subsystems}, so we include a proof.

In Proposition~\ref{Proposition dense subset} and Theorem~\ref{Theorem continuous set-valued dense subset}, we will use the following notation.

If $(X,F)$ is a topological dynamical system, and $Z\se X$, then for each $n\in\N$ and $\e>0$, we define $s_{n,\e}(Z,F)$ to be the largest cardinality of an $\e$-separated subset of $\orb_n(Z,F)=\{\xx\in\orb_n(X,F):x_0\in Z\}$.

\begin{proposition}\label{Proposition dense subset}
	Let $X$ be a compact metric space, and let $f:X\rightarrow X$ be continuous. If $Z$ is a dense subset of $X$, then \[h(f)=\lim_{\e\rightarrow0}\limsup_{n\rightarrow\infty}\frac{1}{n}\log s_{n,\e}(Z,f).\]
\end{proposition}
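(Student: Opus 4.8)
The plan is to prove that for continuous $f$ and dense $Z$, counting separated orbits starting in $Z$ gives the same exponential growth rate as counting all separated orbits. One inequality is free: since $\orb_n(Z,f) \se \orb_n(X,f)$, we trivially have $s_{n,\e}(Z,f) \leq s_{n,\e}(f)$ for every $n$ and $\e$, and hence $\lim_{\e\to0}\limsup_n \frac1n \log s_{n,\e}(Z,f) \leq h(f)$. So the real content is the reverse inequality.

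For the reverse direction, I would fix $\e > 0$ and show that a maximal $(n,\e)$-separated set $S \se \orb_n(X,f)$ can be ``pushed'' into $\orb_n(Z,f)$ at the cost of replacing $\e$ by $\e/2$ and without losing cardinality. Since $f$ is a mapping, an $n$-orbit in $\orb_n(X,f)$ is determined by its initial point $x_0$ via $(x_0, f(x_0), \ldots, f^{n-1}(x_0))$, and the metric $D$ on $\orb_n(X,f)$ is then $D(\xx,\yy) = \max_{0\le i\le n-1} d(f^i(x_0), f^i(y_0))$, which is exactly the $(n,\e)$ metric of Definition~\ref{Definition (n,e)-separated}. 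The key tool is uniform continuity: for the given $n$ and $\e$, choose $\delta > 0$ (depending on $n$) so that $d(a,b) < \delta$ implies $d(f^i(a), f^i(b)) < \e/2$ for all $i = 0,\ldots,n-1$ — this uses that $f, f^2, \ldots, f^{n-1}$ are finitely many uniformly continuous maps. Because $Z$ is dense, for each $x_0$ with $(x_0,\ldots) \in S$ we may pick $z_0 \in Z$ with $d(x_0, z_0) < \delta$; the resulting map $S \to \orb_n(Z,f)$ sends $\xx$ to the orbit of $z_0$. This map is injective on $S$: if two elements of $S$ mapped to the same (or even nearby) orbit, the triangle inequality would force their $D$-distance below $\e$, contradicting $\e$-separation. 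More carefully, the images of distinct elements of $S$ are $\e/2$-separated in $\orb_n(Z,f)$, since each image is within $\e/2$ (in $D$) of the corresponding element of $S$, and the elements of $S$ are $\e$-apart. Hence $s_{n,\e}(f) = |S| \leq s_{n,\e/2}(Z,f)$.

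From the inequality $s_{n,\e}(f) \leq s_{n,\e/2}(Z,f)$, valid for all $n \in \N$ and all $\e > 0$, take $\frac1n \log$ of both sides, then $\limsup_{n\to\infty}$, to get $h(f,\e) \leq \limsup_n \frac1n \log s_{n,\e/2}(Z,f)$; finally let $\e \to 0$ to conclude $h(f) \leq \lim_{\e\to0}\limsup_n \frac1n \log s_{n,\e}(Z,f)$. Combined with the trivial inequality, this gives equality. (One should note the limit defining the right-hand side exists because $\limsup_n \frac1n \log s_{n,\e}(Z,f)$ is monotone in $\e$, just as for the usual entropy.)

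The only subtle point — and the one to state carefully rather than the genuine ``obstacle'' — is that $\delta$ depends on $n$, so this argument is done one $n$ at a time and does not require any uniformity across $n$; that is exactly why the construction lands on the level of $s_{n,\e}$ before any limits are taken, and why it survives the passage to $\limsup$ and then $\e \to 0$. There is no real difficulty beyond bookkeeping with the two scales $\e$ and $\e/2$ and invoking uniform continuity of the finitely many iterates $f^0,\dots,f^{n-1}$; the argument is essentially the classical proof that separated-set counts are insensitive to restriction to a dense set, adapted to the orbit-space language used here.
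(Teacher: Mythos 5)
Your overall strategy is the same as the paper's: keep the trivial inequality $s_{n,\e}(Z,f)\le s_{n,\e}(X,f)$, and for the converse perturb a maximal $(n,\e)$-separated set into $\orb_n(Z,f)$ using density, losing only a fixed factor in the separation constant. (The paper phrases the perturbation via the observation that $\pi_0:\orb_n(X,f)\to X$ is a homeomorphism, so $\orb_n(Z,f)$ is dense in $\orb_n(X,f)$; your appeal to uniform continuity of $f^0,\dots,f^{n-1}$ is the same fact unpacked, and the remark that $\delta$ may depend on $n$ is correct and harmless.)

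There is one quantitative slip: you approximate each orbit of $S$ to within $\e/2$ in $D$ and then claim the perturbed orbits are $\e/2$-separated. The triangle inequality only gives
\[D\bigl(\widetilde{\xx},\widetilde{\yy}\bigr)\ge D(\xx,\yy)-D\bigl(\xx,\widetilde{\xx}\bigr)-D\bigl(\yy,\widetilde{\yy}\bigr)>\e-\tfrac{\e}{2}-\tfrac{\e}{2}=0,\]
which is useless. You need to approximate to within $\e/4$ (as the paper does), which yields separation greater than $\e/2$ and hence $s_{n,\e}(X,f)\le s_{n,\e/2}(Z,f)$. This is a one-character fix — choose $\delta$ so that $d(f^i(a),f^i(b))<\e/4$ for $i=0,\dots,n-1$ — and the rest of your argument, including the passage to $\limsup$ and then $\e\to0$, goes through unchanged.
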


\begin{proof}
	By definition, 
	\[h(f)=\lim_{\e\rightarrow0}\limsup_{n\rightarrow\infty}\frac{1}{n}\log s_{n,\e}(X,f),\]
	so it suffices to show that for each $n\in\N$ and $\e>0$, \[s_{n,\e}(Z,f)\leq s_{n,\e}(X,f)\leq s_{n,\e/2}(Z,f).\]
	Since $Z\se X$, it follows that $s_{n,\e}(Z,f)\leq s_{n,\e}(X,f)$. It remains to show the other inequality.
	
	Recall that $\orb_n(X,f)$ has the metric $D$ defined by $D(\xx,\yy)=\max\{d(x_i,y_i):0\leq i\leq n-1\}$ for $\xx,\yy\in\orb_n(X,f)$. Since $f$ is continuous, the projection map $\pi_0:\orb_n(X,f)\rightarrow X$ is a homeomorphism. Thus, since $Z$ is dense in $X$, it follows that $\orb_n(Z,f)$ is dense in $\orb_n(X,f)$.
	
	Let $n\in\N$ and $\e>0$, and let $S\se\orb_n(X,f)$ be an $(n,\e)$-separated set of maximal cardinality for $f$. Since $\orb_n(Z,f)$ is dense in $\orb_n(X,f)$, for each $\xx\in S$, we may choose $\widetilde{\xx}\in\orb_n(Z,f)$ such that $D(\xx,\widetilde{\xx})<\e/4$. Let $\widetilde{S}=\{\widetilde{\xx}:\xx\in S\}$.
	
	Then, for each $\xx,\yy\in S$ with $\xx\neq\yy$, we have that 
	\begin{eqnarray}
	D\left(\widetilde{\xx},\widetilde{\yy}\right) &\geq& D(\xx,\yy)-D\left(\xx,\widetilde{\xx}\right)-D\left(\yy,\widetilde{\yy}\right)\nonumber\\
	&>&\e-\frac{\e}{4}-\frac{\e}{4}\nonumber\\
	&=&\frac{\e}{2}.\nonumber
	\end{eqnarray}
	It follows that $|S|=|\widetilde{S}|$ and that $\widetilde{S}$ is an $(n,\e/2)$-separated set for $f$. Moreover, since $\widetilde{S}\se\orb_n(Z,f)$, we have that \[s_{n,\e}(X,f)=|S|=|\widetilde{S}|\leq s_{n,\e/2}(Z,f),\] and the result follows.
\end{proof}

Example~\ref{Example positive entropy on a nowhere dense set} illustrates that this result does not hold in general for upper semi-continuous set-valued functions. However, we show in Theorem~\ref{Theorem continuous set-valued dense subset} that it does hold for set-valued functions which are continuous with respect to the Hausdorff metric which we define now.

\begin{definition}
	Let $X$ be a compact metric space with metric $d$. Given a point $x\in X$ and $\e>0$, let $B(x,\e)$ represent the ball of radius $\e$ centered at $x$. We define the \emph{Hausdorff metric}, $\H_d$, on $2^X$ as follows: if $C,D\in2^X$, \[\H_d(C,D)=\sup\left\{\e>0:D\se\bigcup_{c\in C}B(c,\e),\text{ and }C\se\bigcup_{d\in D}B(d,\e)\right\}\]
\end{definition}

\begin{theorem}\label{Theorem continuous set-valued dense subset}
	Let $(X,F)$ be a topological dynamical system such that $F:X\rightarrow 2^X$ is continuous with respect to the Hausdorff metric on $2^X$. If $Z$ is a dense subset of $X$, then \[h(F)=\lim_{\e\rightarrow0}\limsup_{n\rightarrow\infty}\frac{1}{n}\log s_{n,\e}(Z,F).\]
\end{theorem}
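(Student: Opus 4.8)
The plan is to mimic the structure of the proof of Proposition~\ref{Proposition dense subset}: it suffices to show that for every $n\in\N$ and $\e>0$ we have the sandwich
\[
s_{n,\e}(Z,F)\leq s_{n,\e}(X,F)\leq s_{n,\delta}(Z,F)
\]
for some $\delta=\delta(\e)$ with $\delta\to 0$ as $\e\to 0$ (we will get $\delta=\e/2$ but need to pass through an auxiliary parameter governed by uniform continuity of $F$). The left inequality is immediate since $\orb_n(Z,F)\se\orb_n(X,F)$. For the right inequality, the only place the mapping proof used continuity of $f$ was to conclude that $\pi_0:\orb_n(X,f)\to X$ is a homeomorphism, hence that $\orb_n(Z,f)$ is dense in $\orb_n(X,f)$. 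For set-valued $F$ the projection $\pi_0$ is not injective, so density of $\orb_n(Z,F)$ in $\orb_n(X,F)$ is exactly the statement that needs a new argument, and this is where continuity of $F$ with respect to $\H_d$ enters.

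First I would establish the key lemma: \emph{if $F$ is continuous with respect to $\H_d$ and $Z$ is dense in $X$, then $\orb_n(Z,F)$ is dense in $\orb_n(X,F)$.} The proof is by induction on $n$, constructing an approximating orbit coordinate by coordinate. Given a target orbit $(x_0,\ldots,x_{n-1})\in\orb_n(X,F)$ and a tolerance $\eta>0$, one uses uniform continuity of $F$ (valid since $X$, hence $2^X$ with $\H_d$, is compact) to choose a chain of tolerances $\eta=\eta_{n-1}>\cdots>\eta_1>\eta_0>0$ such that $d(s,t)<\eta_{i}$ implies $\H_d(F(s),F(t))<\eta_{i+1}$. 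Pick $z_0\in Z$ with $d(z_0,x_0)<\eta_0$; then $\H_d(F(z_0),F(x_0))<\eta_1$, so since $x_1\in F(x_0)$ there is a point of $F(z_0)$ within $\eta_1$ of $x_1$, and by density of $Z$ (using that $F(z_0)$ is a subset of $X$ on whose neighborhood $Z$ is dense — here one should be a little careful and instead note: choose $z_1\in Z$ with $d(z_1,x_1)<\eta_1/2$ small enough that $z_1$ still lies within $\eta_1$ of a point of $F(z_0)$; actually the clean route is to first pick $y_1\in F(z_0)$ with $d(y_1,x_1)<\eta_1$ and then, since this only gives an orbit point in $X$, restart the induction allowing the $z_i$ to leave $Z$ except for $z_0$ — but that defeats the purpose). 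The correct clean statement: proceed inductively so that at stage $i$ we have $z_0,\ldots,z_i$ with $z_j\in Z$ for $j\le i$, $(z_0,\ldots,z_i)\in\orb_{i+1}(Z,F)$, and $d(z_j,x_j)<\eta_j$; to extend, $\H_d(F(z_i),F(x_i))<\eta_{i+1}$ gives $y\in F(z_i)$ with $d(y,x_{i+1})<\eta_{i+1}$, then by density of $Z$ pick $z_{i+1}\in Z$ with $d(z_{i+1},y)$ as small as desired — but we also need $z_{i+1}\in F(z_i)$, which density does not grant. This is the genuine obstacle.

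To resolve it, the right hypothesis to exploit is that $F(z_i)\in 2^X$ is compact and that we want an orbit \emph{starting} in $Z$, not lying entirely in $Z$. So I would instead prove the weaker, and sufficient, statement: \emph{the set $\{\xx\in\orb_n(X,F):x_0\in Z\}=\orb_n(Z,F)$ is dense in $\orb_n(X,F)$}, where only the zeroth coordinate is constrained. Then the induction works cleanly: given $(x_0,\ldots,x_{n-1})$ and $\eta>0$, pick $z_0\in Z$ with $d(z_0,x_0)$ small; then iteratively, having $z_0,x_1',\ldots,x_i'$ with $x_{i+1}'$ to be chosen, use $\H_d(F(x_i'),F(x_i))$ small (from $d(x_i',x_i)$ small, chained back through the uniform-continuity tolerances to $d(z_0,x_0)$) to select $x_{i+1}'\in F(x_i')$ with $d(x_{i+1}',x_{i+1})<\eta$. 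This yields $(z_0,x_1',\ldots,x_{n-1}')\in\orb_n(Z,F)$ within $D$-distance $\eta$ of the target, proving density. With the lemma in hand, the final inequality follows verbatim from the Proposition~\ref{Proposition dense subset} argument: take a maximal $(n,\e)$-separated set $S\se\orb_n(X,F)$, approximate each element to within $\e/4$ by an element of $\orb_n(Z,F)$, observe by the triangle inequality that the approximants are $\e/2$-separated and distinct, whence $s_{n,\e}(X,F)=|S|\le s_{n,\e/2}(Z,F)$. Taking $\limsup_n\frac1n\log(\cdot)$ and then $\lim_{\e\to0}$ on the sandwich gives the result.

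The main obstacle, as flagged above, is the density lemma, and specifically the temptation to force the entire approximating orbit into $Z$; the fix is to recognize that $\orb_n(Z,F)$ by definition only restricts the initial coordinate, so one needs density of $Z$ only to place $z_0$, after which $\H_d$-continuity of $F$ (upgraded to uniform continuity via compactness) propagates the approximation forward along the orbit. Everything else is a routine adaptation of the single-valued argument, so the writeup should lead with the lemma, prove it by the coordinate-by-coordinate induction using a descending chain of continuity moduli, and then close with the near-verbatim separated-set estimate.
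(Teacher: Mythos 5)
Your final argument is correct, but it takes a different route from the paper's, so a comparison is worth recording. The paper does not work with the finite orbit spaces at all: it invokes Theorem~\ref{Theorem shift map forward orbits} to replace $h(F)$ by $h(\sigma)$ for the shift $\sigma$ on $\forb(X,F)$, applies Proposition~\ref{Proposition dense subset} to the single-valued map $\sigma$, and thereby reduces everything to the single claim that $\forb(Z,F)$ is dense in $\forb(X,F)$. That density is obtained by asserting that $\widehat{F}(x)=\forb(x,F)$ is continuous from $X$ into $2^{\prod X}$ with the metric $\H_\rho$, so that a point $t\in Z$ close to $x_0$ yields a forward orbit through $t$ that is $\rho$-close to the given one. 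Your version instead proves the finite-coordinate analogue --- $\orb_n(Z,F)$ is dense in $\orb_n(X,F)$ --- by the coordinate-by-coordinate induction with a descending chain of uniform-continuity moduli, and then reruns the $\e/4$-perturbation of a maximal separated set verbatim. The two density statements carry essentially the same analytic content: the continuity of $\widehat{F}$, which the paper asserts without proof, is established by exactly the kind of propagation argument you spell out, so your writeup is the more self-contained of the two (and it also sidesteps the step, elided in the paper, of translating separated sets for $\sigma$ relative to $\forb(Z,F)$ back into the quantity $s_{n,\e}(Z,F)$ appearing in the statement). What the paper's route buys is brevity and reuse of Theorem~\ref{Theorem shift map forward orbits} and Proposition~\ref{Proposition dense subset} as black boxes. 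Two small points for your final writeup: excise the false start about forcing every coordinate into $Z$ (your own diagnosis --- that $\orb_n(Z,F)$ constrains only the zeroth coordinate --- is exactly right and should be the opening observation of the lemma, not its resolution); and when you fix the chain $\eta_0<\eta_1<\cdots<\eta_{n-1}=\eta$, say explicitly that it is chosen by downward induction via uniform continuity of $F$ on the compact space $X$, with each $\eta_i\leq\eta$, so that the constructed orbit lies within $D$-distance $\eta$ of the target in every coordinate.
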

\begin{proof}
	From Theorem~\ref{Theorem shift map forward orbits}, we have that the entropy of $F$ is equal to the entropy of the shift map $\sigma$ on $\forb(X,f)$. Thus, since $\sigma$ is a mapping, in light of Proposition~\ref{Proposition dense subset}, it suffices to show that $\forb(Z,f)$ is dense in $\forb(X,f)$.
	
	Recall that $\prod_{i=0}^\infty X$ has the metric $\rho$ defined for $\xx,\yy\in\forb(X,F)$ by \[\rho(\xx,\yy)=\sup_{i\geq0}\frac{d\left(x_i,y_i\right)}{i+1}.\]
	
	Define $\widehat{F}:X\rightarrow2^{\prod X}$ by $\widehat{F}(x)=\forb(x,F)$. Then, $\widehat{F}$ is continuous with respect to the Hausdorff metric $\H_\rho$ on $2^{\prod X}$. Thus, for any $\xx\in\forb(X,F)$ and $\e>0$, we may choose $\delta>0$ to witness the continuity of $\widehat{F}$ at $x_0$. Since $Z$ is dense in $X$, there exists $t\in Z$ such that $d(x_0,t)<\delta$. Then \[\H_\rho\left[\forb(x_0,F),\forb(t,F)\right]<\e,\] so there exists $\yy\in\forb(t,F)\se\forb(Z,F)$ such that $\rho(\xx,\yy)<\e$.
\end{proof}

For mappings on the interval $[0,1]$ we have the following two results concerning periodicity.

\begin{theorem}[{\v{S}}arkovs$'$ki{\u\i} {\cite{Sharkovsky}}]
Define the relation $\prec$ on $\N$ by \[3\prec 5\prec 7\prec\cdots\prec 3\cdot 2\prec 5\cdot 2\prec\cdots\prec 2^3\prec 2^2\prec 2\prec 1.\] If $f:[0,1]\rightarrow[0,1]$ is continuous, and has a periodic point of period $n\in\N$, then it has a periodic point of period $m$, for all $n\prec m$.
\end{theorem}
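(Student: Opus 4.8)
The statement above is the classical theorem of {\v S}arkovski{\u\i}; we sketch the standard proof via the interval-covering digraph and the {\v S}tefan cycle. For closed subintervals $I,J$ of $[0,1]$ write $I\to J$ (``$I$ $f$-covers $J$'') when $f(I)\supseteq J$. Everything rests on two elementary consequences of the Intermediate Value Theorem: (1) if $I\to J$, then $I$ contains a closed subinterval that $f$ maps \emph{onto} $J$; and (2) given a loop $I_{0}\to I_{1}\to\cdots\to I_{m-1}\to I_{0}$ in the covering digraph, there is a point $x\in I_{0}$ with $f^{i}(x)\in I_{i}$ for $0\le i\le m-1$ and $f^{m}(x)=x$. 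One also needs a primitivity refinement of (2): when the $I_{i}$ are the gap intervals determined by a known periodic orbit and the loop is not a proper power of a shorter loop, one can pin down the exact period of $x$ to be $m$ by tracking $x$ against that orbit.

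The plan is to first reduce to the case of an odd period. Write $n=2^{j}q$ with $q$ odd. If $q=1$ there is nothing to prove when $n=1$, while for $n=2^{j}$ with $j\ge 1$ the periods $m$ with $n\prec m$ are exactly $2^{j-1},\dots,2,1$; these come from the ``halving'' lemma that a period-$2^{a}$ point forces a period-$2^{a-1}$ point. (For $a\ge 2$: pass to $f^{2^{a-2}}$, for which the given point has period $4$; check that every period-$4$ orbit forces a period-$2$ orbit; then a gcd count on periods shows the resulting point has $f$-period exactly $2^{a-1}$. For $a=1$ this is the Intermediate Value Theorem.) If $q\ge 3$, set $g=f^{2^{j}}$; the given point has $g$-period exactly $q$, so it suffices to prove the theorem for the odd period $q$ applied to $g$ and then transfer the conclusions back to $f$ using the bookkeeping that an $f^{2^{j}}$-periodic point of period $r$ has $f$-period $2^{i}r$ for some $0\le i\le j$. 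Inducting on $j$ absorbs the branches with $i<j$: a period $2^{i}r$ with $i<j$ and $r\ge 3$ already satisfies $2^{i}r\prec 2^{j}q$, so by the inductive hypothesis it forces every $m$ with $2^{j}q\preceq m$.

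The heart of the matter is the odd case: $g$ has an odd period $\ge 3$. Let $q$ be the \emph{least} odd period $>1$ occurring for $g$, fix a period-$q$ orbit, list its points $p_{1}<\cdots<p_{q}$, and set $I_{k}=[p_{k},p_{k+1}]$. The {\v S}tefan structure lemma says that, after possibly replacing the orbit, the covering digraph on $\{I_{1},\dots,I_{q-1}\}$ is rich enough to contain a primitive loop of every length $m$ with $q\prec m$: it has a self-loop, a spanning $(q-1)$-cycle, and a family of chords from its ``end'' interval, and one checks that every length $m$ with $q\prec m$ is realized by a primitive loop while the only lengths \emph{not} realized --- the odd numbers strictly between $1$ and $q$ --- are the ones $q$ does not force, which is where minimality of $q$ is used. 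Feeding these loops through (2) and its primitivity refinement yields, for $g$, a fixed point and a periodic point of period $m$ for every $m$ with $q\prec m$ (the smallest powers of $2$ can alternatively be obtained from the halving lemma applied to a power of $2$ that exceeds $q$). This settles the odd case and, through the reduction above, the theorem.

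The two steps I expect to be genuinely delicate, rather than routine, are the following. First, the {\v S}tefan structure lemma: establishing that the orbit of least odd period $>1$ has a representative whose gap-digraph contains the {\v S}tefan cycle, and extracting from it exactly the primitive loops of lengths $m$ with $q\prec m$ --- this is the combinatorial core, and the only place the ``least odd period'' hypothesis enters (any forbidden-length loop would manufacture an odd periodic orbit of period strictly between $1$ and $q$). Second, the primitivity bookkeeping: upgrading ``$f^{m}(x)=x$'' to ``$x$ has exact period $m$'', and, in the general case, transferring exact periods cleanly between $f$ and $f^{2^{j}}$. Working over the closed interval $[0,1]$ rather than over $\mathbb{R}$ introduces no difficulty, since every covering/IVT argument takes place inside $[\min P,\max P]\subseteq[0,1]$ for the relevant orbit $P$.
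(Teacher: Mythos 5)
This is a classical theorem that the paper imports from the literature with a citation to {\v{S}}arkovs$'$ki{\u\i}; the paper gives no proof of its own, so there is nothing to compare your argument against. On its own terms, your sketch is an accurate outline of the standard proof: the two IVT covering lemmas, the itinerary/loop lemma, the reduction $n=2^{j}q$ with the halving lemma for powers of $2$ and the passage to $g=f^{2^{j}}$ for the odd part, the {\v{S}}tefan cycle for the least odd period $>1$, and the period-transfer bookkeeping between $f$ and $f^{2^{j}}$ are all correct ingredients, and your accounting of which loop lengths the {\v{S}}tefan digraph realizes (everything except the odd numbers strictly between $1$ and $q$) matches the set $\{m: q\prec m\}$ exactly. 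Be aware, though, that as written this is a proof \emph{sketch}: the two steps you yourself flag as delicate --- the {\v{S}}tefan structure lemma and the primitivity/exact-period upgrades --- are precisely the substance of the theorem, and they are asserted rather than proved. A complete write-up would need the combinatorial construction of the {\v{S}}tefan orbit from minimality of the odd period, the argument that a primitive loop through the gap intervals yields a point of exact period $m$ (ruling out the degenerate case where the itinerary point lies on the original periodic orbit), and the induction on $j$ spelled out so that the transfer of periods from $f^{2^{j}}$ back to $f$ is not circular. For the purposes of this paper, citing the literature, as the authors do, is the appropriate choice.
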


We also have the following result which relates periodicity to positive topological entropy. A proof may be found in \cite[Section~15.3]{Katok_Hasselblatt}

\begin{theorem}
Let $f:[0,1]\rightarrow[0,1]$ be continuous. Then $h(f)=0$ if, and only if, the period of every periodic point is a power of $2$.
\end{theorem}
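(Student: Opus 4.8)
The plan is to derive this classical fact from the theory of \emph{horseshoes} for interval maps together with the iterate formula $h(f^k)=kh(f)$ (Theorem~\ref{Theorem h(f^k)=kh(f)}). Recall that a horseshoe for $f$ is a pair of closed subintervals $J_0,J_1\subseteq[0,1]$ with disjoint interiors such that $f^n(J_0)\supseteq J_0\cup J_1$ and $f^n(J_1)\supseteq J_0\cup J_1$ for some $n\in\N$. A standard covering argument --- the intermediate value theorem to realize every admissible itinerary, plus the observation that $f^n$ then carries a compact invariant set semiconjugate onto the full $2$-shift --- shows that a horseshoe forces $h(f^n)\ge\log 2$, hence $h(f)>0$; moreover it produces, for each periodic word in $\{0,1\}$, a periodic point of $f^n$ realizing that itinerary, so in particular $f^n$ has a point of exact period $3$.

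First I would show that if every periodic point of $f$ has period a power of $2$, then $h(f)=0$; I argue by contrapositive and assume $h(f)>0$. The essential input is Misiurewicz's theorem for interval maps: positive entropy implies that some iterate $f^n$ has a horseshoe. As noted above, $f^n$ then has a point $x$ of exact period $3$. Writing $n=2^a$ and letting $p$ denote the $f$-period of $x$, the $f^n$-period of $x$ equals $p/\gcd(p,n)$, so $p/\gcd(p,n)=3$, i.e.\ $p=3\gcd(p,n)$; since $\gcd(p,n)$ divides $2^a$, this forces $p=3\cdot 2^c$ for some $c\le a$, which is not a power of $2$. This contradicts the hypothesis, so $h(f)=0$.

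Next I would show the converse: if $f$ has a periodic point whose period $n$ is not a power of $2$, then $h(f)>0$. Write $n=2^kq$ with $q$ odd and $q\ge3$, and put $g=f^{2^k}$. Since $q$ is odd, $\gcd(n,2^k)=2^k$, so the chosen point has exact period $q$ under $g$; thus $g$ has a periodic orbit of odd period $q\ge3$. Here I would invoke \v{S}tefan's lemma: arranging the $q$ orbit points in their natural order along $[0,1]$ yields a cyclic system of subintervals whose $g$-covering graph (the \v{S}tefan cycle) contains a loop pattern forcing a horseshoe for some iterate of $g$ --- equivalently, its transition matrix has spectral radius greater than $1$. (For $q=3$ this already gives $h(g)\ge\log\varphi$, exactly as in Example~\ref{Example bounded entropy}.) Hence $h(g)>0$, and by Theorem~\ref{Theorem h(f^k)=kh(f)} we conclude $h(f)=2^{-k}h(g)>0$.

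The main obstacle is that the substantive content lies in classical one-dimensional dynamics rather than in the machinery built in this paper. The deep ingredient is Misiurewicz's theorem --- positive entropy $\Rightarrow$ a horseshoe in some iterate --- whose proof rests on the non-wandering/basic-set decomposition for interval maps (or on Misiurewicz--Szlenk lap-counting); I would cite it rather than attempt to reprove it. \v{S}tefan's combinatorial lemma, used in the converse, is more elementary but still intricate, and I would likewise cite it rather than carry out the interval-graph bookkeeping in full.
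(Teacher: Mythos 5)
The paper does not actually prove this theorem: it is quoted as a classical fact with a pointer to \cite[Section~15.3]{Katok_Hasselblatt}, so there is no in-paper argument to compare against. Your sketch is the standard proof from that literature, and its structure is sound: Misiurewicz's horseshoe theorem in one direction, the \v{S}tefan cycle in the other, glued together by $h(f^k)=kh(f)$ and the arithmetic of periods under iteration. Two small repairs are needed. First, in the forward direction you write ``$n=2^a$'' for the iterate carrying the horseshoe, but Misiurewicz's theorem gives no control over $n$; fortunately the assumption is unnecessary, since $p/\gcd(p,n)=3$ already gives $p=3\gcd(p,n)$, hence $3$ divides $p$ and $p$ cannot be a power of $2$ regardless of what $n$ is. Second, the claim that a horseshoe for $f^n$ yields a point of \emph{exact} period $3$ carries the usual technicality: the itinerary/covering argument a priori produces a point whose $f^n$-period only divides $3$, and the degenerate case forces that point onto a common endpoint of $J_0$ and $J_1$, which must be excluded (this is handled in the standard references, e.g.\ Block--Coppel, and is fine to cite, but you rely on exactness of the period so it should be acknowledged). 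With those adjustments the argument is the classical one, and since the paper itself defers the entire proof to the literature, your deferring of Misiurewicz's theorem and \v{S}tefan's lemma to citations is consistent with the intended level of detail.
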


The following example illustrates that neither of these results necessarily hold for set-valued functions on the interval.

\begin{example}\label{Example period three zero entropy}
Let $F:[0,1]\rightarrow2^{[0,1]}$ be defined by $F(x)=\{0\}$ for all $x\neq 1/3,2/3,1$, $F(1/3)=\{0,2/3\}$, $F(2/3)=\{0,1\}$, and $F(1)=\{0,1/3\}$. Then $F$ has three periodic orbits of period three and a fixed point but no other periodic orbits. Moreover, $h(F)=0$.
\end{example}

\section{Infinite Topological Entropy and the Structure of Orbit Spaces}\label{Section infinite entropy}

Finally, we explore the concept of infinite topological entropy and its relationship to the structure of the orbit spaces. We begin by presenting sufficient conditions for a set-valued function to have infinite topological entropy. We then consider set-valued functions on $[0,1]$ for which the image and inverse image of a point is connected. We present in Example~\ref{Example zero entropy Hilbert cube} such a function whose entropy is zero, yet whose forward orbit space contains a Hilbert cube (a countable product of non-degenerate closed intervals).

\begin{theorem}\label{Theorem infinite entropy}
Let $(X,F)$ be a topological dynamical system. If there exists an infinite set $A\se X$ such that for all $a\in A$, $F(a)\supseteq A$, then $h(F)=\infty$.
\end{theorem}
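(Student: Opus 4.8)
The plan is to show that the hypothesis lets us embed a full shift on $N$ symbols into the finite-orbit spaces of $F$ for every $N\in\N$, which forces $h(F)\geq\log N$ for all $N$. First I would fix $N\in\N$ and, using that $A$ is infinite, choose distinct points $a_1,\ldots,a_N\in A$. Setting $\e_N=\min\{d(a_i,a_j):i\neq j\}>0$ and $B=\{a_1,\ldots,a_N\}$, the key observation is that $B^n\se\orb_n(X,F)$ for every $n\in\N$: any $\xx\in B^n$ has $x_0\in B\se A$, and for each $i$ we have $x_{i+1}\in B\se A\se F(x_i)$ since $F(x_i)\supseteq A$, so $\xx$ is indeed an $n$-orbit.

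Next I would estimate $s_{n,\e_N}(F)$ from below. If $\xx,\yy\in B^n$ are distinct, they disagree in some coordinate $j$, and there $d(x_j,y_j)\geq\e_N$ by the choice of $\e_N$; hence $D(\xx,\yy)\geq\e_N$, so $B^n$ is an $(n,\e_N)$-separated set for $F$. Since $|B^n|=N^n$, this yields $s_{n,\e_N}(F)\geq N^n$ for all $n$, and therefore
\[
h(F,\e_N)=\limsup_{n\rightarrow\infty}\frac{1}{n}\log s_{n,\e_N}(F)\geq\log N.
\]

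Finally, because $s_{n,\e}(F)$ is non-increasing in $\e$ (a smaller $\e$ only weakens the separation requirement), $h(F,\e)$ is non-increasing in $\e$, so $h(F)=\lim_{\e\rightarrow0}h(F,\e)=\sup_{\e>0}h(F,\e)\geq h(F,\e_N)\geq\log N$. Since $N\in\N$ was arbitrary, $h(F)=\infty$. I do not expect a genuine obstacle here; the only point to be careful about is that one generally cannot separate all of $A$ by a single $\e$ — for instance $A$ could be a convergent sequence — but this is immaterial, since for each $N$ we only use $N$ points at a time and are free to let $\e_N$ depend on $N$, and $h(F)$ is computed by letting $\e\rightarrow0$ in any case.
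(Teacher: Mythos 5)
Your proof is correct and follows essentially the same route as the paper: both arguments rest on the observation that $A^n\se\orb_n(X,F)$ and then extract, for each $N$, an $N$-point $\e$-separated subset of $A$ whose $n$-fold products give $s_{n,\e}\geq N^n$; the paper merely organizes this by fixing $\e$ and taking a maximal $\e$-separated subset $A_\e\se A$ with $|A_\e|\to\infty$, whereas you fix $N$ and choose $\e_N$ afterward. Your appeal to the monotonicity of $h(F,\e)$ in $\e$ correctly handles the final passage to the limit.
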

\begin{proof}
For each $\e>0$, choose $A_\e$ to be an $\e$-separated subset of $A$ of maximum cardinality, and let $\alpha(\e)=|A_\e|$. Since for each $a\in A$, $A\se F(a)$, we have that for each $n\in\N$, $A^n\se\orb_n(X,F)$. In particular, $A_\e^n$ is a subset of $\orb_n(X,F)$ and is $\e$-separated. Therefore, $s_{n,\e}\geq[\alpha(\e)]^n$ which implies that $h(F,\e)\geq\log\alpha(\e)$.

Since $A$ is an infinite set, $\alpha(\e)\rightarrow\infty$ as $\e\rightarrow0$, so $h(F)=\infty$.
\end{proof}

\begin{corollary}\label{Corollary infinite entropy}
Let $(X,F)$ be a topological dynamical system. If there exists an infinite set $A\se X$ and a $k\in\N$ such that for all $a\in A$, $F^k(a)\supseteq A$, then $h(F)=\infty$.
\end{corollary}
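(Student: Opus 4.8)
The plan is to deduce Corollary~\ref{Corollary infinite entropy} from Theorem~\ref{Theorem infinite entropy} together with Theorem~\ref{Theorem bounding entropy}. The hypothesis gives us an infinite set $A\se X$ with $F^k(a)\supseteq A$ for every $a\in A$, which is precisely the hypothesis of Theorem~\ref{Theorem infinite entropy} applied to the topological dynamical system $(X,F^k)$ rather than $(X,F)$. So the first step is simply to invoke Theorem~\ref{Theorem infinite entropy} with $F$ replaced by $F^k$ to conclude $h(F^k)=\infty$.

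The second step is to transfer this back to $F$. By Theorem~\ref{Theorem bounding entropy}, $h(F)\leq h(F^k)\leq kh(F)$. The right-hand inequality $h(F^k)\leq kh(F)$ forces $kh(F)=\infty$, and since $k\in\N$ is finite this gives $h(F)=\infty$. (Equivalently, one may cite Corollary~\ref{Corollary iterates}(2), which states $h(F)=\infty$ if and only if $h(F^k)=\infty$; either route is immediate.)

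There is essentially no obstacle here: the corollary is a routine combination of the two prior results, and the only thing to be careful about is making sure that $(X,F^k)$ is itself a topological dynamical system so that Theorem~\ref{Theorem infinite entropy} applies to it. That is immediate, since $F^k$ is upper semi-continuous whenever $F$ is (as noted in Section~\ref{Section Preliminary Definitions}, compositions of upper semi-continuous set-valued functions are upper semi-continuous). I would write the proof in two sentences.
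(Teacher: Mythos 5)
Your proposal is correct and matches the paper's proof exactly: apply Theorem~\ref{Theorem infinite entropy} to $(X,F^k)$ to get $h(F^k)=\infty$, then transfer back via Corollary~\ref{Corollary iterates} (equivalently, the inequality $h(F^k)\leq kh(F)$ from Theorem~\ref{Theorem bounding entropy}). Nothing further is needed.
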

\begin{proof}
By Theorem~\ref{Theorem infinite entropy}, we have that $h(F^k)=\infty$, so from Corollary~\ref{Corollary iterates}, it follows that $h(F)=\infty$.
\end{proof}

For a set-valued function satisfying the hypotheses of either Theorem~\ref{Theorem infinite entropy} or Corollary~\ref{Corollary infinite entropy}, 
its forward orbit space would contain a copy of $A^\N$. It is crucial however that this is a countable product of one infinite set. We demonstrate in Example~\ref{Example zero entropy Hilbert cube} that an orbit space may contain a countable product of infinite sets while the set-valued function has zero entropy.

Before Example~\ref{Example zero entropy Hilbert cube} we define what is meant by a monotone set-valued function.

\begin{definition}
A function $F:X\rightarrow2^X$ is called \emph{monotone} if for each $x\in X$ , $F(x)$ and $F^{-1}(x)$ are each connected.
\end{definition}

A compact, connected, metric space is called a \emph{continuum}. A continuum in which every proper subcontinuum is nowhere dense is called \emph{indecomposable}.

Barge and Diamond prove in \cite{Barge_Diamond-Dynamics_of_maps_finite_graphs} that if $f$ is a piece-wise monotone mapping on a finite graph $G$, then $h(f)>0$ if and only if $\orb(G,f)$ contains an indecomposable subcontinuum.  Example~\ref{Example zero entropy Hilbert cube} demonstrates that this does not hold in general for set-valued functions.

\begin{example}\label{Example zero entropy Hilbert cube}
Let $F:[0,1]\rightarrow2^{[0,1]}$ be the monotone function defined for each $x\in[0,1]$ by $F(x)=[0,x]$. Then $\forb([0,1],F)$ contains copies of the Hilbert cube, and $h(F)=0$.
\end{example}

\begin{proof}
First, note that, in particular, $\forb([0,1],F)$ contains the Hilbert cube \[\prod_{i=1}^\infty\left[\frac{1}{2^i},\frac{1}{2^{i-1}}\right].\]

To show that $h(F)=0$, we show that $h(\sigma)=0$ where $\sigma$ is the shift map on $\forb([0,1],F)$.  First, we claim that the set of non-wandering points is equal to the set of constant sequences (i.e. fixed points for $\sigma$). To see this, let $\xx\in\forb([0,1],F)$, and suppose that $\xx$ is not fixed by $\sigma$. Then there exists some $j\in\N$, such that $x_{j+1}\neq x_j$. From the definition of $F$, it follows that $x_{j+1}<x_j$, and, for all $i>j$, $x_i\leq x_{j+1}<x_j$.

Fix disjoint intervals $I_1$ and $I_2$ such that $x_{j}\in I_1$ and $x_{j+1}\in I_2$, and let $U=\pi_j^{-1}(I_1)\cap\pi_{j+1}^{-1}(I_2)$. Then $\sigma^i(U)$ is disjoint from $U$ for all $i\in\N$. Hence, the only non-wandering points are the fixed points, so $\sigma$ restricted the non-wandering points is the identity. Thus, by Theorem~\ref{Theorem Bowen non-wandering}, $h(\sigma)=0$.
\end{proof}

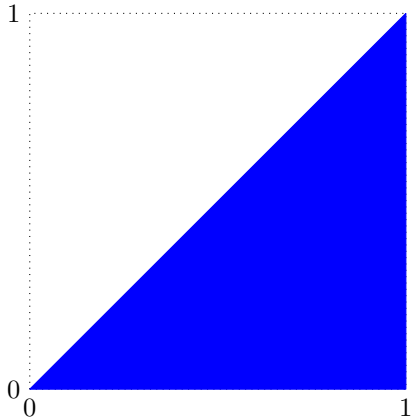
\begin{figure}
\begin{tikzpicture}[scale=5]
\draw[dotted] (0,0) node[left]{0} -- (0,1) node[left]{1} -- (1,1) -- (1,0) node[below]{1} -- (0,0) node[below]{0};
\draw[join=bevel, draw=blue, fill=blue] (1/2,1/2)--(1,1)--(1,0)--(0,0)--(1/2,1/2);
\end{tikzpicture}\caption{Set-valued Function from Example~\ref{Example zero entropy Hilbert cube}}\label{Figure zero entropy Hilbert cube}
\end{figure}

We conclude this paper with the following question.

\begin{question}\label{Question monotone function 0 or infinite}
Does there exist a monotone function $F:[0,1]\rightarrow2^{[0,1]}$ such that $0<h(F)<\infty$.
\end{question}

\bibliography{BIBentropy}
\bibliographystyle{amsplain}
\end{document}